\newcommand\restr[2]{{
  \left.\kern-\nulldelimiterspace 
  #1 
  \vphantom{\big|} 
  \right|_{#2}
  }}
\mathchardef\mhyphen="2D
\journalname{Journal of Theoretical Probability}
\begin{document}

\clearpage\thispagestyle{empty}

\begin{center}

\textsc{\Large Uniquely determined uniform probability on the natural numbers}\\[1.5cm]

\textsc{\LARGE Timber Kerkvliet and Ronald Meester\footnote{T. Kerkvliet, R. Meester.
	\\ Department of Mathematics, Faculty of Sciences, VU University,
	De Boelelaan 1081a, 1081 HV Amsterdam, The Netherlands.
	E-mail:t.kerkvliet@vu.nl, r.w.j.meester@vu.nl.
}}\\[1.5cm]
\end{center}

\begin{abstract} 
In this paper, we address the problem of constructing a uniform probability measure on $\mathbb{N}$. Of course, this is not possible within the bounds of the Kolmogorov axioms and we have to violate at least one axiom. We define a probability measure as a finitely additive measure assigning probability $1$ to the whole space, on a domain which is closed under complements and finite disjoint unions. We introduce and motivate a notion of uniformity which we call weak thinnability, which is strictly stronger than extension of natural density. We construct a weakly thinnable probability measure and we show that on its domain, which contains sets without natural density, probability is uniquely determined by weak thinnability. In this sense, we can assign uniform probabilities in a canonical way. We generalize this result to uniform probability measures on other metric spaces, including $\mathbb{R}^n$.

\keywords{Uniform probability \and Foundations of probability \and Kolmogorov axioms \and Finite additivity  } 
\subclass{60A05}
\end{abstract}

\section{Introduction and main results}
\label{sec:introduction}

Within the bounds of the Kolmogorov axioms \cite{kolmogorov}, a probability measure on $\mathbb{N} = \{1,2,3,...\}$ cannot assign the same probability to every singleton and therefore, a uniform probability measure on $\mathbb{N}$ does not exist. Despite this, we have some intuition about what a uniform probability measure on $\mathbb{N} $ should look like. According to this intuition, for example, we would assign probability $1/2$ to the subset of all odd numbers. If we want to capture this intuition in a mathematical framework, we have to violate at least one of the axioms of Kolmogorov.

One suggestion by De Finetti \cite{definetti} is to relax countable additivity of the measure to finite additivity. To see why this suggestion is reasonable, we must first understand why it is possible, within the axioms of Kolmogorov, to set up uniform (Lebesgue) measure on $[0,1]$. The type of additivity we demand plays a crucial role here. In the standard theory one always demands \emph{countable} additivity. If every singleton has the same probability, in an infinite space, every singleton must have probability zero. With countable additivity this means that every countable set must have probability zero. This is no problem if we are working on the uncountable $[0,1]$, since we still have freedom to assign different probabilities to different uncountable subsets of $[0,1]$. The interval $[0,1/2]$, for example, has Lebesgue measure $1/2$, while it is equipotent with $[0,1]$, which has Lebesgue measure $1$. This works because the cardinality of the set over which we sum is smaller than the 
cardinality of the space itself.

On $\mathbb{N}$ the problem of countable additivity is immediately clear: since every subset of $\mathbb{N}$ is countable, every subset should have probability zero, which is impossible because the probability of $\mathbb{N}$ itself should be $1$. In analogy with Lebesgue measure, we want finite subsets to have probability zero and we want to be able to assign different probabilities to countable subsets. To do this, we should change the type of additivity to finite additivity. In short: since the cardinality of the space changes from uncountable to countable, the additivity should change from countable to finite.

Schirokauer and Kadane \cite{schirokauerkadane} study three different collections of finitely additive probability measures on $\mathbb{N}$ which may qualify as uniform: the set $L$ of measures that extend natural density, the set $S$ of shift-invariant measures and the set $R$ of measures that measure residue classes uniform. They show that $N \subset S \subset R$ where the inclusions are strict. If a set $A \subseteq \mathbb{N}$ is without natural density, i.e. 
\begin{equation}
\frac{|A \cap \{1,2,\ldots,n\}|}{n}
\end{equation}
does not converge as $n \rightarrow \infty$, different measures in $L$ assign different probabilities to $A$. So even the smallest collection discussed by Schirokauer and Kadane does not lead to a uniquely determined uniform probability for sets which do not have a natural density. This observation brings us to the main goal of this paper.

\medskip\noindent
\textbf{Main goal:} find a natural notion of uniformity, stronger than extension of natural density, such that the collection of all probability measures that are uniform under this notion, assign the same probability to a large collection of sets. In particular, this collection of sets should be larger than the collection of sets having a natural density.

\medskip
In this paper, we introduce and study a notion of uniformity which is stronger than the extension of natural density. A uniform probability measure on $[0,1]$ or on a finite space is characterised by the property that if we condition on any suitable subset, the resulting conditional probability measure is again uniform on that subset. It is this property that we will generalise, and the generalised notion will be called {\em weak thinnability}. (The actual definition of weak thinnability is given later, and will also involve two technical conditions.) 

We allow probability measures to be defined on collections of sets that are closed under complements and finite disjoint unions. This is because we think there is no principal reason to insist that all sets are measured, just like not all subsets of $\mathbb{R}$ are Lebesgue measurable. We should, however, be cautious when allowing  domains that are not necessarily algebras, for the following reason. De Finetti \cite{definetti} uses a Dutch Book argument to conclude that, under the Bayesian interpretation of probability, a probability measure has to be coherent. He shows that if the domain of the probability measure is an algebra, the finite additivity of the probability measure implies coherence. On domains only closed under complements and finite disjoint unions, however, this implication no longer holds. Therefore, someone sharing de Fenitti's view of probability, would like to add coherence as additional constraint. For completeness, we study both the case with and the case without coherence as additional constraint on the probability measure.

\begin{definition}
\label{def:probabilitypair}
Let $X$ be a space and write $\mathcal{P}(X)$ for the power set of $X$. An \emph{$f$-system} on $X$ is a nonempty collection $\mathcal{F} \subseteq \mathcal{P}(X)$ such that
\begin{description}
\item[1] $A,B \in \mathcal{F}$ with $A \cap B = \emptyset$ implies that $A \cup B \in \mathcal{F}$,
\item[2] $A \in \mathcal{F}$ implies that $A^c \in \mathcal{F}$.
\end{description}
A \emph{probability measure} on an $f$-system $\mathcal{F}$ is a map $\mu: \mathcal{F} \rightarrow [0,1]$ such that
\begin{description}
\item[1]  $A,B \in \mathcal{F}$ with $A \cap B = \emptyset$ implies that $\mu(A \cup B) = \mu(A)+\mu(B)$,
\item[2]  $\mu(X)=1$.
\end{description}
A \emph{coherent} probability measure is a probability measure $\mu: \mathcal{F} \rightarrow [0,1]$ such that for all $n \in \mathbb{N}$, $\alpha_1,...,\alpha_n \in \mathbb{R}$, $A_1,...,A_n \in \mathcal{F}$
\begin{equation}
\label{eq:coherence}
\sup_{x \in X} \sum_{i=1}^n \alpha_i ( I_{A_i}(x) - \mu(A_i)) \geq 0.
\end{equation}
A \emph{probability pair} on $X$ is a pair $(\mathcal{F},\mu)$ such that $\mathcal{F}$ is an $f$-system on $X$ and $\mu$ is a probability measure on $\mathcal{F}$.
\end{definition}

\begin{remark}
Schurz and Leitgeb \cite[p.~261]{schurzleitgeb} call an $f$-system a pre-Dynkin system, since in case of closure under \emph{countable} unions of mutually disjoint sets, such a collection is called a Dynkin system.
\end{remark}

\begin{remark}
Expression \ref{eq:coherence} has the following interpretation. If $\alpha_i \geq 0$, we buy a bet on $A_i$ that pays out $\alpha_i$ for $\alpha\mu(A_i)$. If $\alpha_i<0$, we sell a bet on $A_i$ that pays out $|\alpha_i|$ for $|\alpha_i|\mu(A_i)$. Then (\ref{eq:coherence}) expresses there is no guaranteed amount of net loss.
\end{remark}

We aim at uniquely determining the probability of as many sets as possible. In particular, we are interested in probability pairs with an $f$-system consisting only of sets with a uniquely determined probability. So we are not only interested in probability pairs satisfying our stronger notion of uniformity, but in the {\em canonical} ones, where ``canonical" is to be understood in the following way.

\begin{definition}
\label{def:uniquevalues}
Let $P$ be some collection of probability pairs. A pair $(\mathcal{F},\mu) \in P$ is canonical with respect to $P$ if for every $A \in \mathcal{F}$ and every pair $(\mathcal{F}',\mu') \in P$ with $A \in \mathcal{F}'$  we have $\mu(A)=\mu'(A)$. 
\end{definition}

Before we give a more detailed outline of our paper, we need the following definition. Set
\begin{equation}
\mathcal{M} := \left\{ \bigcup_{i=1}^\infty [a_{2i-1},a_{2i}) \;\;:\;\; 0 \leq a_1 \leq a_2 \leq a_3 \leq \cdots \right\}.
\end{equation}
Note that $\mathcal{M}$ is an algebra on $[0,\infty)$. It turns out that by working on $[0,\infty)$ instead of $\mathbb{N}$, where we restrict ourselves to sub-$f$-systems of $\mathcal{M}$, we can formulate and prove our claims much more elegantly. Here, we view the elements of $\mathcal{P}(\mathbb{N})$ embedded in $\mathcal{M}$ by the injection
\begin{equation}
A \mapsto \bigcup_{n \in A} [n-1,n).
\end{equation}
We should emphasize, however, that conceptually there is no difference between $[0,\infty)$ and $\mathbb{N}$ and that the work we do in Sections \ref{sec:uniformity} and \ref{sec:pair} can be done in the same way for $\mathbb{N}$. After working on $\mathcal{M}$, we explicitly translate our result to $\mathbb{N}$ and other metric spaces in Section \ref{sec:general}.

For $A \in \mathcal{M}$ we define $\rho_A: [0,\infty) \rightarrow [0,1]$ by $\rho_A(0):=0$ and
\begin{equation}
\rho_A(x) := \frac{1}{x} \int_0^x 1_A(y) \mathrm{d}y
\end{equation}
for $x>0$. Also set
\begin{equation}
\mathcal{C} := \left\{ A \in \mathcal{M} \;:\; \rho_A(x) \;\mathrm{converges} \right\},
\end{equation}
which are the elements of $\mathcal{M}$ that have natural density and let $\lambda: \mathcal{C} \rightarrow [0,1]$ be given by
\begin{equation}
\lambda(A) := \lim_{x \rightarrow \infty} \rho_A(x). 
\end{equation}
We write $L^*$ for the collection of probability pairs $(\mathcal{F},\mu)$ on $[0,\infty)$ such that $\mathcal{C} \subseteq \mathcal{F} \subseteq \mathcal{M}$ and $\mu(A)=\lambda(A)$ for $A \in \mathcal{C}$. Our earlier observation about the indeterminacy of probability under $L$ gets the following formulation in terms of $L^*$: a pair $(\mathcal{F},\mu) \in L^*$ is canonical with respect to $L^*$ if and only if $\mathcal{F} = \mathcal{C}$. We write $WT$ for the collection of probability pairs that are a \emph{weakly thinnable pair} (WTP), that is, a probability pair that satisfies the condition of weak thinnability. The collection $WT$ is a proper subset of $L^*$ and contains pairs $(\mathcal{F},\mu)$ canonical with respect to $WT$ such that $\mathcal{F} \setminus \mathcal{C} \not= \emptyset$. In other words, with restricting $L^*$ to $WT$ we are able to assign a uniquely determined probability to some sets without natural density. Finally, we write $WTC \subseteq WT \subset L^*$ for the elements $(\mathcal{F},\mu) \in WT$ such that $\mu$ is coherent.

The structure of this paper is as follows. In Section \ref{sec:uniformity}, we discuss weak thinnability and motivate why this is a natural notion of uniformity. In Section \ref{sec:pair}, we introduce the probability pair $(\mathcal{A}^\mathrm{uni},\alpha)$ where
\begin{equation}
\label{eq:introAuni}
\mathcal{A}^\mathrm{uni} = \left\{ A \in \mathcal{M} \;:\; \exists L \;\; \lim_{D \rightarrow \infty} \sup_{x\in(1,\infty)} \left| \frac{1}{\log(D)} \int_x^{xD} \frac{1_A(y)}{y} \mathrm{d}y - L \right| = 0 \right\}
\end{equation}
and
\begin{equation}
\label{eq:introalpha}
\alpha(A) = \lim_{D \rightarrow \infty}  \frac{1}{\log(D)} \int_1^D \frac{1_A(y)}{y} \mathrm{d}y.
\end{equation}
\begin{remark}
The expression in (\ref{eq:introalpha}) is sometimes called the logarithmic density of $A$ \cite[p.~272]{tenenbaum}.
\end{remark}
We end Section \ref{sec:pair} with the following theorem, which is the main result of our paper. 
\begin{theorem}[Main theorem]
\label{thm:main}
The following holds:
\begin{itemize}
\item[1] The pair $(\mathcal{A}^\mathrm{uni},\alpha)$ is a WTP, is extendable to a WTP $(\mathcal{F},\mu)$ with $\mathcal{F}=\mathcal{M}$ and $\alpha$ is coherent.
\item[2] The pair $(\mathcal{A}^\mathrm{uni},\alpha)$ is canonical with respect to both $WT$ and $WTC$.
\item[3] If a pair $(\mathcal{F},\mu)$ is canonical with respect to $WT$ or $WTC$, then $\mathcal{F} \subseteq \mathcal{A}^\mathrm{uni}$.
\end{itemize}
\end{theorem}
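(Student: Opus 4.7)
The plan is to address the three assertions in order, each building on the previous.

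For part (1), I would first verify that $\mathcal{A}^{\mathrm{uni}}$ is an $f$-system and that $\alpha$ is a probability measure on it. Closure under complements follows from $1_{A^c} = 1 - 1_A$, so the logarithmic average of $1_{A^c}$ on $[x,xD]$ equals $1$ minus that of $1_A$ and hence converges uniformly to $1-L$; closure under disjoint finite unions and finite additivity of $\alpha$ come straight from linearity of the integral; and $\alpha([0,\infty)) = 1$ since $\frac{1}{\log D}\int_1^D \frac{dy}{y} = 1$. Weak thinnability of $(\mathcal{A}^{\mathrm{uni}},\alpha)$ I would then check clause-by-clause against the definition given in Section~\ref{sec:uniformity}; the key point is that the uniform convergence in (\ref{eq:introAuni}) is exactly what allows the values of $\alpha$ to be computed after a thinning operation. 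The extendability to a WTP on all of $\mathcal{M}$ I would establish by a Zorn's lemma argument on the partially ordered set of WTP extensions of $(\mathcal{A}^{\mathrm{uni}},\alpha)$ inside $\mathcal{M}$, showing that a maximal element must have domain $\mathcal{M}$; this hinges on the lemma that any WTP whose $f$-system is properly contained in $\mathcal{M}$ admits a proper WTP-extension, a Hahn-Banach type step. Coherence of $\alpha$ I would deduce from its extension to the algebra $\mathcal{M}$: a finitely additive probability on an algebra is automatically coherent by De Finetti's classical result, and coherence is inherited by restriction to any sub-$f$-system.

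For part (2), let $(\mathcal{F}',\mu') \in WT$ and $A \in \mathcal{A}^{\mathrm{uni}} \cap \mathcal{F}'$. The idea is that weak thinnability of $\mu'$ forces $\mu'(A)$ to agree with the logarithmic averages of $1_A$ over intervals $[x,xD]$ in the limit; because $A \in \mathcal{A}^{\mathrm{uni}}$, that limit is $\alpha(A)$ uniformly in the starting point $x$, so no ambiguity remains. Concretely, weak thinnability ties $\mu'(A)$ to conditional expressions of the form $\mu'(A \cap [x,xD]) / \mu'([x,xD])$, and the additional technical conditions built into the definition identify these conditional values with the logarithmic averages appearing in (\ref{eq:introAuni}); letting $D \to \infty$ and using the uniform convergence yields $\mu'(A) = \alpha(A)$. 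The same argument applies verbatim when coherence is imposed, proving canonicity with respect to both $WT$ and $WTC$.

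For part (3), I would argue by contraposition. Suppose $(\mathcal{F},\mu) \in WT$ (resp. $WTC$) has some $A \in \mathcal{F}\setminus \mathcal{A}^{\mathrm{uni}}$. Negating the defining property of $\mathcal{A}^{\mathrm{uni}}$ produces two sequences $(x_n,D_n)$ and $(x_n',D_n')$ with $D_n, D_n' \to \infty$ along which the logarithmic averages of $1_A$ tend to different limits $L_1 \ne L_2$. Along each of these sequences I would construct a separate WTP extension of $(\mathcal{A}^{\mathrm{uni}},\alpha)$ whose $f$-system contains $A$ and which assigns $A$ the value $L_i$, using a Zorn-type extension starting from the prescription $A \mapsto L_i$; in the coherent case one has to check additionally that $L_i$ lies in the interval of values consistent with coherence, which follows because $L_i$ is a limit of convex combinations of admissible values. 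Existence of two such pairs shows $(\mathcal{F},\mu)$ cannot be canonical, a contradiction.

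The main obstacle throughout is the extension machinery for weakly thinnable pairs used in parts (1) and (3): since weak thinnability is a nonlinear condition (it involves conditional measures rather than just linear constraints on $\mu$), the standard Hahn-Banach argument does not apply directly. The delicate step is constructing the sublinear dominating functional against which one extends so that the resulting extension still respects the thinning operations, and, in the coherent case, also lies within the coherence-admissible envelope.
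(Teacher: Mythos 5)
Two of your three steps rest on machinery that you have not supplied and that, as you yourself note, is the delicate point; and in both places the paper avoids that machinery entirely. For the extension in part (1) and the two competing pairs in part (3), you invoke a Zorn/Hahn--Banach-style ``one-step extension preserving weak thinnability,'' but you never show such a one-step extension exists, and since P1 is a multiplicative (nonlinear) constraint there is no obvious dominating sublinear functional to extend against. The paper sidesteps this: it writes $\mathcal{A}^\mathrm{uni}=\bigcap_{(s,f)\in\mathcal{P}}\mathcal{A}^{s,f}$ (Lemma \ref{lem:rep1}), checks directly that each $(\mathcal{A}^{s,f},\alpha^{s,f})$ is a WTP via the identity $\rho_{A\circ B}(x)=\rho_A(x)\rho_B(x\rho_A(x))$, gets the extension to all of $\mathcal{M}$ by taking $\mu(A):=\mathcal{U}\mhyphen\lim_n\xi_A(s_n,f_n)$ for a free ultrafilter (multiplicativity of the $\mathcal{U}$-limit gives P1 for free), and in part (3) the two pairs assigning different values to $A\notin\mathcal{A}^\mathrm{uni}$ are simply $(\mathcal{A}^{s',f'},\alpha^{s',f'})$ and $(\mathcal{A}^{s'',f''},\alpha^{s'',f''})$ along two subsequences --- no extension of a prescribed value $A\mapsto L_i$ is ever needed. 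Your coherence argument (de Finetti, via extension to the algebra $\mathcal{M}$) matches the paper, but it is contingent on the extension you have not constructed.

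The more serious gap is part (2). Your mechanism --- that weak thinnability ties $\mu'(A)$ to conditional ratios $\mu'(A\cap[x,xD])/\mu'([x,xD])$ --- cannot work as stated: any WTP assigns measure $0$ to every bounded set (P3 plus additivity), so these ratios are $0/0$ and carry no information; P1 only constrains $\mu'$ through the thinning operation $C\circ A$ with $C\in\mathcal{C}$, not through intersections with intervals. The paper's proof of Theorem \ref{thm:unique} is genuinely different and is the core of the result: one builds from $A$ a new set $\hat A=\bigcup_{p,k}Z(p,k)\circ A$ by thinning $A$ into disjoint sets $Z(p,k)\in\mathcal{C}$ of known density, so that the block averages $\tau_{\hat A}(C,\cdot)$ become running averages of $n$ consecutive block averages of $A$; P1 gives $\mu(\hat A)=\mu(A)\sum_{p,k}\lambda(Z(p,k))$, while P2 (via the comparison set of Lemma \ref{lem:ineqforB} and Proposition \ref{prop:muandrho}) bounds $\mu(\hat A)$ by $C\sup_j\tau_{\hat A}(C,j)$; letting $n\to\infty$ and $C\downarrow 1$ and invoking Lemma \ref{lem:UlogA} ($\lim_{C\downarrow 1}U^*(C,A)=U(\log(A))$) yields $L(\log(A))\le\mu(A)\le U(\log(A))$, which is exactly the statement that every WTP agrees with $\alpha$ on $\mathcal{A}^\mathrm{uni}$. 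Nothing in your sketch produces these squeezing inequalities, so the canonicity claim is unsupported as written.
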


In Section \ref{sec:general}, we derive from $(\mathcal{A}^\mathrm{uni},\alpha)$ analogous probability pairs on certain metric spaces including Euclidean space. The proofs of the results in Sections \ref{sec:uniformity}-\ref{sec:general} are given in Section \ref{sec:proofs}.

We write $\mathbb{N}_0:= \{0,1,2,...\}$. For real-valued sequences $x,y$ or real-valued functions $x,y$ on $[0,\infty)$ we write $x \sim y$ or $x_i \sim y_i$ if $\lim_{i \rightarrow \infty} (x_i-y_i)=0$. Since we work only on $[0,\infty)$ in Sections \ref{sec:uniformity} and \ref{sec:pair}, every time we speak of an $f$-system, probability pair or probability measure it is understood that this is on $[0,\infty)$.

\section{Weak thinnability}
\label{sec:uniformity}

Let $m$ be the Lebesgue measure on $\mathbb{R}$. For Lebesgue measurable $Y \subseteq \mathbb{R}$ with $0<m(Y)<\infty$ the uniform probability measure on $Y$ is given by
\begin{equation}
\mu_{Y}(X) := \frac{m(X)}{m(Y)}
\end{equation}
for all Lebesgue measurable $X \subseteq Y$. Let $A \subseteq B \subseteq C$ be all Lebesgue measurable with $m(B)>0$ and $m(C)<\infty$. Observe that 
\begin{equation}
\label{eq:finiterelation}
\mu_C(A) = \mu_C(B) \mu_B(A).
\end{equation}

We want to generalize this property to a property of probability pairs on $[0,\infty)$. For $A \in \mathcal{M}$ define $S_A: [0,\infty) \rightarrow [0,\infty)$ by
\begin{equation}
S_A(x) := m(A \cap [0,x)).
\end{equation}
Write
\begin{equation}
\mathcal{M}^* := \left\{ A \in \mathcal{M} \;:\; m(A) = \infty \right\}.
\end{equation}
Consider for $A \in \mathcal{M}^*$  the map $f_A: A \rightarrow [0,\infty)$ given by $f_A(x):=S_A(x)$. The map $f_A$ gives a one-to-one correspondence between $A$ and $[0,\infty)$. If $A \in \mathcal{M}^*$ and $B \in \mathcal{M}$, we want to introduce notation for the set
\begin{equation}
\{ f_A^{-1}(b) \;:\; b \in B \},
\end{equation}
that gives the subset of $A$ that corresponds to $B$ under $f_A$. Inspired by van Douwen \cite{vandouwen}, we introduce the following operation.

\begin{definition}
\label{def:thinnability}
For $A,B \in \mathcal{M}$, define
\begin{equation}
A \circ B := \{ x \in [0,\infty) \;:\; x \in A \;\wedge\;S_A(x) \in B \}.
\end{equation}
\end{definition}
Note that if $A,B \in \mathcal{M}$, then $A \circ B \in \mathcal{M}$ and that for $A \in \mathcal{M}^*$ we have
\begin{equation}
A \circ B = \{ f_A^{-1}(b) \;:\; b \in B \}.
\end{equation}
We can view this operation as thinning $A$ by $B$ because we create a subset of $A$, where $B$ is ``deciding'' which parts of $A$ are removed. We also can view the operation $A \circ B$ as thinning out $B$ over $A$, since we ``spread out'' the set $B$ over $A$. Taking for example
\begin{equation}
A = \bigcup_{i=0}^\infty [2i,2i+1) = [0,1) \cup [2,3) \cup [4,5) \cup [6,7) \cup ...
\end{equation}
and
\begin{equation}
B = \bigcup_{i=1}^\infty [i^2-1,i^2) =[0,1) \cup [3,4) \cup [8,9) \cup [15,16) \cup ...
\end{equation}
we get
\begin{equation}
A\circ B = [0,1) \cup [6,7) \cup [16,17) \cup [30,31) \cup [48,49) \cup [70,71) \cup ...
\end{equation}
and
\begin{equation}
B\circ A = [0,1) \cup [8,9) \cup [24,25) \cup [48,49) \cup [80,81] \cup [120,121) \cup ...
\end{equation}

Let $(\mathcal{F},\mu)$ be a probability pair and let $A \in \mathcal{F} \cap \mathcal{M}^*$. If $B \in \mathcal{M}$, the set $A\circ B$ is the subset of $A$ corresponding to $B$. We can use this to transform $\mu$ into a measure on $A$ as follows. We set $\mathcal{F}_A := \{ A \circ B \;:\; B \in \mathcal{F} \}$ and then define $\mu_A : \mathcal{F}_A \rightarrow [0,1]$ by
\begin{equation}
\label{eq:derivativemeasure}
\mu_A(A \circ B) := \mu(B).
\end{equation}
Given $B \in \mathcal{F}$ such that $A \circ B \in \mathcal{F}$, the condition that
\begin{equation}
\label{eq:infiniterelation1}
\mu(A\circ B) = \mu(A) \mu_A(A \circ B)
\end{equation}
is a natural generalization of (\ref{eq:finiterelation}). Using (\ref{eq:derivativemeasure}) this translates into
\begin{equation}
\label{eq:infiniterelation3}
\mu(A\circ B) = \mu(A) \mu(B).
\end{equation}

We now have the restriction that $A \in \mathcal{F} \cap \mathcal{M}^*$. However, if $A \in \mathcal{F} \setminus \mathcal{M}^*$, then any uniform probability measure should assign $0$ to $A$ and since $A\circ B \subseteq A$ (\ref{eq:infiniterelation3}) still holds. In Section \ref{subsec:thinnability}, we show that the condition that (\ref{eq:infiniterelation3}) holds for all $A,B \in \mathcal{F}$ is so strong that only probability pairs with relatively small $f$-systems satisfy it. Since it is our goal to find a notion of uniformity that allows for a canonical pair with a large $f$-system, we choose to use a weakened version of this property which asks that $\mu(C \circ A) = \mu(C)\mu(A)$ for every $C \in \mathcal{C}$ and $A \in \mathcal{F}$.

Weak thinnability also involves two technical conditions. Let $(\mathcal{F},\mu)$ be a probability pair, let $A,B \in \mathcal{F}$ and suppose it is true for every $x \in [0,\infty)$ that 
\begin{equation}
\label{eq:orderingrho}
S_A(x) \geq S_B(x).
\end{equation}
Since this inequality is true \emph{for every} $x$, the set $B$ is ``sparser'' than $A$. Therefore, it is natural to ask that $\mu(A) \geq \mu(B)$. We call this property ``preserving ordering by $S$''.

Since we have $\mathcal{C} \subseteq \mathcal{F}$, it seems natural to also ask $\restr{\mu}{\mathcal{C}}=\lambda$, but it turns out to be sufficient to ask the weaker property that $\mu([c,\infty))=1$ for every $c \in [0,\infty)$. So, to reduce redundancy we require the latter and then prove that $\restr{\mu}{\mathcal{C}}=\lambda$. Putting everything together, we obtain the following definition.

\begin{definition}
\label{def:wtp}
A probability pair $(\mathcal{F},\mu)$ with $\mathcal{F} \subseteq \mathcal{M}$ is a WTP if it satisfies the following conditions:
\begin{description}
\item[P1] For every $C \in \mathcal{C}$ and $A \in \mathcal{F}$ we have $C \circ A \in \mathcal{F}$ and $\mu(C \circ A)=\mu(C)\mu(A)$,
\item[P2] $\mu$ preserves ordering by $S$,
\item[P3] $\mu([c,\infty))=1$ for every $c \in [0,\infty)$.
\end{description}
\end{definition}

That every WTP extends natural density is implied by the following result. 

\begin{proposition}
\label{prop:muandrho}
Let $(\mathcal{F},\mu) \in WT$. Then for $A \in \mathcal{F}$ we have
\begin{equation}
\liminf_{x \rightarrow \infty} \rho_A(x) \leq \mu(A) \leq \limsup_{x \rightarrow \infty} \rho_A(x).
\end{equation}
\end{proposition}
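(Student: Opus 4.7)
My plan is to sandwich $\mu(A)$ between $\liminf \rho_A$ and $\limsup \rho_A$ by comparing $A$ (after a suitable shift) with periodic reference sets of rational density, whose $\mu$-measure can be pinned down directly from the axioms. The first preparatory step is shift-invariance: for any $c \geq 0$ the set $C := [c, \infty) \in \mathcal{C}$ has $\mu(C) = 1$ by P3, and one computes from the definition of $\circ$ that $C \circ A = A + c$, so applying P1 gives $A + c \in \mathcal{F}$ and $\mu(A + c) = \mu(A)$ for every $A \in \mathcal{F}$.

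The second preparatory step fixes $\mu$ on canonical sets of rational density. For integers $1 \leq p \leq n$ set $F_n := \bigcup_{k \geq 0}[kn, kn+1)$ and $E_{p/n} := \bigcup_{k \geq 0}[kn, kn+p) = \bigsqcup_{j=0}^{p-1}(F_n + j)$. The translates $F_n, F_n + 1, \ldots, F_n + (n-1)$ are pairwise disjoint with union $[0, \infty)$, so finite additivity plus shift-invariance force $n\mu(F_n) = 1$, hence $\mu(F_n) = 1/n$ and $\mu(E_{p/n}) = p/n$. An elementary calculation also gives $S_{E_{p/n}}(x) \geq (p/n)x$ for every $x \geq 0$, with overshoot bounded by $p$.

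For the upper bound take rational $q \in (\limsup \rho_A, 1]$, choose $q' \in (\limsup \rho_A, q)$ and $N$ with $S_A(x) \leq q' x$ for $x \geq N$, and pick $c$ large enough that $S_{A+c}(x) \leq qx$ holds for \emph{every} $x \geq 0$: vacuously for $x < c$; from $S_{A+c}(x) \leq x - c$ together with a lower bound on $c$ for $c \leq x \leq c + N$; and from $S_A(x - c) \leq q'(x - c) \leq qx$ for $x \geq c + N$. Then $S_{E_q}(x) \geq qx \geq S_{A+c}(x)$ everywhere, so P2 combined with shift-invariance yields $\mu(A) = \mu(A+c) \leq \mu(E_q) = q$; letting rational $q \searrow \limsup \rho_A$ concludes. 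The lower bound is symmetric: for rational $q \in (0, \liminf \rho_A)$, set $D := E_q + M$ with $M$ large enough that $S_D(x) \leq S_A(x)$ for all $x \geq 0$ (using $S_{E_q}(y) \leq qy + p$ together with $S_A(x) \geq q' x$ for some $q' \in (q, \liminf \rho_A)$ and $x \geq N$); P2 then yields $q = \mu(D) \leq \mu(A)$.

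The main obstacle I expect is the careful choice of the shift constants $c$ and $M$ so that the $S$-domination holds \emph{for every} $x \geq 0$, not merely asymptotically. The density hypotheses on $\rho_A$ only control $S_A$ outside a compact initial segment, whereas P2 is a pointwise statement, so one must buy enough empty space at the left edge of the shifted comparison set to absorb the unit-slope boundary error into the linear budget $(q - q')x$ provided by the density gap.
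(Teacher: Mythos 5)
Your proposal is correct and follows essentially the same route as the paper: translation invariance derived from P1 and P3 (via $[c,\infty)\circ A = A+c$), periodic sets of rational density whose measure is forced by finite additivity plus translation invariance, and a P2 comparison with such a reference set. The only cosmetic differences are that the paper pads the comparison set with an initial block $[0,K)$ instead of shifting $A$ itself, and it obtains the lower bound by applying the upper bound to $A^c$ rather than by a symmetric direct comparison.
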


\section{The pair $(\mathcal{A}^\mathrm{uni},\alpha)$}
\label{sec:pair}

For $A \in \mathcal{M}$ set $\sigma_A: (0,\infty)^2 \rightarrow [0,1]$ given by
\begin{equation}
\sigma_A(D,x) := \frac{1}{D} \int_{x}^{x+D} 1_A(y) \mathrm{d}y,
\end{equation}
which is the average of $1_A$ over the interval $[x,x+D]$. Then set for any $A \in \mathcal{M}$
\begin{equation}
U(A) := \limsup_{D \rightarrow \infty} \sup_{x \in (0,\infty)} \sigma_A(D,x)
\end{equation}
and
\begin{equation}
L(A) := \liminf_{D \rightarrow \infty} \inf_{x \in (0,\infty)} \sigma_A(D,x).
\end{equation}
Define
\begin{equation}
\mathcal{W}^\mathrm{uni} := \{ A \in \mathcal{M} \;:\; L(A)=U(A) \}.
\end{equation}
It is easy to check that $(\mathcal{W}^\mathrm{uni},\restr{\lambda}{\mathcal{W}^\mathrm{uni}})$ is a probability pair. For any $A \in \mathcal{M}$, we set
\begin{equation}
\log(A) := \{ \log(a) \;:\; a \in A \cap [1,\infty) \}.
\end{equation}

\begin{definition}
\label{def:Aunialpha}
We define
\begin{equation}
\mathcal{A}^\mathrm{uni} := \{ A \in \mathcal{M} \;:\; \log(A) \in \mathcal{W}^\mathrm{uni} \}
\end{equation}
and $\alpha: \mathcal{A}^\mathrm{uni} \rightarrow [0,1]$ by
\begin{equation}
\alpha(A) := \lambda(\log(A)).
\end{equation}
\end{definition}

Notice that Definition \ref{def:Aunialpha} gives a definition of $(\mathcal{A}^\mathrm{uni},\alpha)$ that is slightly different from (\ref{eq:introAuni}) and (\ref{eq:introalpha}). For a justification of equations \ref{eq:introAuni} and \ref{eq:introalpha}, see the proof of Lemma \ref{lem:rep1}. Our first concern is that $\alpha$ coincides with natural density.
\begin{proposition}
\label{prop:alphaextendslambda}
We have $\mathcal{C} \subseteq \mathcal{A}^\mathrm{uni}$ and for every $A \in \mathcal{C}$
\begin{equation}
\alpha(A) = \lambda(\log(A)) = \lambda(A).
\end{equation}
\end{proposition}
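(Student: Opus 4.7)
The plan is to reduce the claim to a classical fact: for any set with natural density $L$, the uniform logarithmic density also exists and equals $L$. To do this I first rewrite the condition $\log(A)\in\mathcal{W}^\mathrm{uni}$ in the logarithmic variable, and then exploit the convergence $\rho_A(y)\to L$ via integration by parts.

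First, I would substitute $u=\log y$ in the integral defining $\sigma_{\log(A)}(D,x)$. Since $1_{\log(A)}(\log y)=1_A(y)$ for $y\geq 1$, and $du=dy/y$, one gets
\begin{equation}
\sigma_{\log(A)}(D,x)=\frac{1}{D}\int_{e^{x}}^{e^{x}e^{D}}\frac{1_{A}(y)}{y}\,\mathrm{d}y=\frac{1}{\log D'}\int_{x'}^{x'D'}\frac{1_{A}(y)}{y}\,\mathrm{d}y,
\end{equation}
with $x'=e^x\in(1,\infty)$ and $D'=e^D\to\infty$. Hence $\log(A)\in\mathcal{W}^\mathrm{uni}$ with $\lambda(\log(A))=L$ is equivalent to the uniform logarithmic density of $A$ (over $x'>1$) existing and equalling $L$. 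Also note that, since $\sigma_A(D,x)\to \rho_A(D)$ as $x\downarrow 0$, one has $\inf_{x>0}\sigma_A(D,x)\leq \rho_A(D)\leq \sup_{x>0}\sigma_A(D,x)$, so $L(A)=U(A)$ automatically forces $\rho_A$ to converge and $\lambda$ to be well defined with the same value.

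Next I would express the inner integral in terms of $\rho_A$. Writing $F(y)=\int_0^y 1_A(t)\mathrm{d}t=y\rho_A(y)$ and integrating by parts,
\begin{equation}
\int_{x'}^{x'D'}\frac{1_A(y)}{y}\,\mathrm{d}y=\rho_A(x'D')-\rho_A(x')+\int_{x'}^{x'D'}\frac{\rho_A(y)}{y}\,\mathrm{d}y.
\end{equation}
Dividing by $\log D'$, the boundary term is $O(1/\log D')$ uniformly in $x'$. For the integral term, given $\varepsilon>0$ pick $M$ with $|\rho_A(y)-L|<\varepsilon$ for $y>M$, and split the interval $[x',x'D']$ into $[x',\max(x',M)]\cup[\max(x',M),x'D']$ (taking $D'>M$ so the second piece is non-empty). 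The first piece contributes at most $\log M$ in absolute value (the integrand is bounded by $1$), and the second contributes within $\varepsilon\log D'$ of $L\log(D')$. Dividing by $\log D'$ and taking $D'$ large enough that $\log M/\log D'<\varepsilon$ yields uniform closeness to $L$ across all $x'>1$.

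Combining these steps gives $L(\log(A))=U(\log(A))=L$, so $A\in\mathcal{A}^\mathrm{uni}$ and $\alpha(A)=\lambda(\log(A))=L=\lambda(A)$. The main obstacle I anticipate is exactly the uniformity in $x'$: near $x'=1$ the integrand $\rho_A(y)/y$ need not be close to $L$, so one must verify that the "head" contribution of size $O(\log M)$ is swallowed by the normalising factor $\log D'$, which is what makes the argument go through only in the limit $D'\to\infty$ rather than term-by-term.
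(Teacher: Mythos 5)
Your proof is correct and follows essentially the paper's route: the change of variables $u=\log y$ together with the integration by parts turning $1_A(y)/y$ into $\rho_A(y)/y$ plus a boundary term $O(1/\log D')$ is exactly the computation the paper isolates in Lemma \ref{lem:rep1}, after which the paper's proof of Proposition \ref{prop:alphaextendslambda} is the one-line observation that $\rho_A(y)\to\lambda(A)$ forces $\xi_A(s_n,f_n)\sim\lambda(A)$. The only difference is presentational: you work directly with the $\sup/\inf$ definition of $\mathcal{W}^\mathrm{uni}$ and make the uniformity in $x'$ explicit via the $\varepsilon$--$M$ head/tail split (and you also spell out why the common value equals $\lambda(\log(A))$), whereas the paper encodes that uniformity through the arbitrary-sequence characterization $(\mathcal{A}^{s,f},\alpha^{s,f})$ of Lemma \ref{lem:rep1}.
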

A typical example of a set in $\mathcal{A}^\mathrm{uni}$ that is not in $\mathcal{C}$, is
\begin{equation}
A = \bigcup_{n=0}^\infty [e^{2n},e^{2n+1}).
\end{equation}
It is easy to check that $A \not\in \mathcal{C}$, but
\begin{equation}
\log(A) = \bigcup_{n=0}^\infty [2n,2n+1),
\end{equation}
so $\log(A) \in \mathcal{W}^\mathrm{uni}$ with $\lambda(\log(A))=1/2$. Hence $A \in \mathcal{A}^\mathrm{uni}$ with $\alpha(A)=1/2$.

That $(\mathcal{A}^\mathrm{uni},\alpha)$ is a probability pair follows directly from the fact that $(\mathcal{W}^\mathrm{uni},\restr{\lambda}{\mathcal{W}^\mathrm{uni}})$ is a probability pair. The pair  $(\mathcal{A}^\mathrm{uni},\alpha)$ is also a WTP.
\begin{theorem}
\label{thm:wtp}
We have $(\mathcal{A}^\mathrm{uni},\alpha) \in WTC \subseteq WT$ and we can extend $(\mathcal{A}^\mathrm{uni},\alpha)$ to a $WTP$ with $\mathcal{M}$ as $f$-system.
\end{theorem}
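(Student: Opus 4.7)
My plan has three parts: verifying axioms P1--P3 for $(\mathcal{A}^\mathrm{uni},\alpha)$, deriving coherence, and producing the $\mathcal{M}$-extension. The workhorse is the integration-by-parts identity
\begin{equation*}
\int_1^D \frac{1_A(y)}{y}\,dy \;=\; \frac{S_A(D)}{D} - S_A(1) + \int_1^D \frac{S_A(y)}{y^2}\,dy,
\end{equation*}
which, after dividing by $\log D$ and letting $D\to\infty$, kills the boundary terms and gives the representation
\begin{equation*}
\alpha(A) \;=\; \lim_{D\to\infty} \frac{1}{\log D}\int_1^D \frac{S_A(y)}{y^2}\,dy,
\end{equation*}
with an analogous uniform-in-$x$ statement characterising membership in $\mathcal{A}^\mathrm{uni}$. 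From this representation, P2 is immediate since $S_A\ge S_B$ passes pointwise into the integrand, and P3 follows from $S_{[c,\infty)}(y)=(y-c)_+$, whose ratio to $y^2$ is asymptotic to $1/y$, yielding $\alpha([c,\infty))=1$ with the required uniformity.

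The substantive step is P1. Fix $C\in\mathcal{C}$ with $\lambda(C)>0$ (the case $\lambda(C)=0$ reduces via P2 to $\alpha(C\circ A)\le\alpha(C)=0$) and $A\in\mathcal{A}^\mathrm{uni}$. The identity $1_{C\circ A}(y)=1_C(y)\,1_A(S_C(y))$ combined with the change of variables $u=S_C(y)$ on $C$ (where $\restr{S_C}{C}$ is a Lebesgue-measure-preserving bijection onto $[0,\infty)$) yields
\begin{equation*}
\int_x^{xD} \frac{1_{C\circ A}(y)}{y}\,dy \;=\; \int_{S_C(x)}^{S_C(xD)} \frac{1_A(u)}{g_C(u)}\,du,
\end{equation*}
where $g_C$ is the inverse of $\restr{S_C}{C}$. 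Since $S_C(y)/y\to\lambda(C)$, one has $g_C(u)\sim u/\lambda(C)$, so the right-hand side is asymptotic to $\lambda(C)\int_{S_C(x)}^{S_C(xD)} \frac{1_A(u)}{u}\,du$. Using $\log S_C(xD)-\log(xD)=O(1)$ together with the uniform convergence built into the definition of $\mathcal{A}^\mathrm{uni}$, dividing by $\log D$ produces the limit $\lambda(C)\alpha(A)=\alpha(C)\alpha(A)$ uniformly in $x>1$, which establishes both $C\circ A\in\mathcal{A}^\mathrm{uni}$ and the product rule. I expect the main technical obstacle to be carrying the $o(u)$ error in the asymptotic for $g_C$ and the endpoint shift $S_C(x)\ne x$ uniformly in the sup over $x$, especially when $x$ is small and $S_C(x)$ is close to $0$; a quantitative version of $S_C(y)=\lambda(C)y+o(y)$ will be needed there.

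For coherence and the $\mathcal{M}$-extension, recall from the introduction that finite additivity on an algebra already implies coherence, and $\mathcal{M}$ is an algebra. So it suffices to exhibit a WTP extension $\mu$ of $\alpha$ to $\mathcal{M}$; restriction back to $\mathcal{A}^\mathrm{uni}$ then gives coherence of $\alpha$ itself. I would build $\mu$ iteratively in the spirit of Horn--Tarski: adjoining one set $E\in\mathcal{M}\setminus\mathcal{A}^\mathrm{uni}$ at a time, choose $\mu(E)$ inside the interval $[\liminf_{x\to\infty}\rho_E(x),\,\limsup_{x\to\infty}\rho_E(x)]$, which is nonempty and is forced upon any extension satisfying P2, P3 by Proposition~\ref{prop:muandrho}, and further restrict to values that keep the linear constraints $\mu(C\circ E)=\lambda(C)\mu(E)$ for $C\in\mathcal{C}$ simultaneously satisfiable. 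Zorn's lemma then delivers the full extension to $\mathcal{M}$. The delicate step here is verifying that at each stage the intersection of the admissible intervals imposed by all the P1-relations (over $C\in\mathcal{C}$) remains nonempty; I anticipate doing this by applying Proposition~\ref{prop:muandrho} to $C\circ E$ and using $\rho_{C\circ E}(x)\to\lambda(C)\rho_E(S_C(x)/\lambda(C))$-type asymptotics to match the two sides of the constraint.
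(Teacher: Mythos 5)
Your verification of P1--P3 for $(\mathcal{A}^\mathrm{uni},\alpha)$ is essentially sound, but it takes a different route from the paper: you work directly with the uniform-in-$x$ definition and push the change of variables $u=S_C(y)$ through the sup over $x$, whereas the paper first proves the representation $\mathcal{A}^\mathrm{uni}=\bigcap_{(s,f)\in\mathcal{P}}\mathcal{A}^{s,f}$, $\alpha=\alpha^{s,f}$ (Lemma \ref{lem:rep1}) and then verifies P1 for each fixed sequence $(s_n,f_n)$ via $\rho_{C\circ A}(x)=\rho_C(x)\rho_A(x\rho_C(x))$, where the endpoint shift costs only the additive constant $2\log(1/\lambda(C))$, which dies after dividing by $\log(s_n)$. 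The sequence reduction buys exactly what you flag as the main obstacle: no uniformity in $x$ has to be tracked, and membership of $C\circ A$ in $\mathcal{A}^\mathrm{uni}$ falls out for free because it lies in every $\mathcal{A}^{s,f}$. Your direct route can be made to work, but the case analysis for small $x$ (where $S_C(x)$ may be $0$ and $\lambda(C)/u$ is not comparable to $1/g_C(u)$) and for the large-$x$/fixed-$D$ regime genuinely has to be written out. Also note a small slip: in the $\lambda(C)=0$ case you cannot invoke P2 before knowing $C\circ A\in\mathcal{A}^\mathrm{uni}$; you need the (easy) direct estimate that $\rho_{C\circ A}(y)\leq\rho_C(y)\to 0$ forces $\sup_{x>1}\xi_{C\circ A}(D,x)\to 0$.

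The genuine gap is in the extension to $\mathcal{M}$. Your Horn--Tarski/Zorn scheme asserts, but does not establish, the feasibility step: when you adjoin $E$, the $f$-system generated by the old domain, $E$, and all sets $C\circ E$, $C\circ E^c$ ($C\in\mathcal{C}$) carries infinitely many simultaneous constraints --- finite additivity on the generated system, the product rule $\mu(C\circ E)=\lambda(C)\mu(E)$ for \emph{all} $C\in\mathcal{C}$, and P2 against every set already in the domain --- and the claim that a single choice of $\mu(E)$ in $[\liminf\rho_E,\limsup\rho_E]$ satisfies them all is precisely the hard content of the theorem, not a routine appeal to Proposition \ref{prop:muandrho} (which gives necessary conditions for a WTP, not consistency of the system). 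Note also that extending a finitely additive measure off an $f$-system onto the algebra $\mathcal{M}$ is not automatic (it is essentially equivalent to coherence), so your coherence argument, which correctly mirrors the paper's appeal to de Finetti once an $\mathcal{M}$-extension exists, inherits this gap. The paper avoids all of this with a one-shot construction: fix a free ultrafilter $\mathcal{U}$ and one $(s,f)\in\mathcal{P}$ and set $\mu(A):=\mathcal{U}\mhyphen\lim_n\xi_A(s_n,f_n)$ on all of $\mathcal{M}$; linearity and monotonicity of the $\mathcal{U}$-limit give additivity, P2 and P3 at once, and P1 follows because $\xi_{C\circ A}(s_n,f_n)\sim\lambda(C)\xi_A(s_n,f_n)$, the same asymptotic computation as in the P1 verification. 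Replacing your iterative scheme by this ultrafilter limit (or else supplying a genuine consistency proof for each adjunction step) is needed to complete the proof.
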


\begin{remark}
We use free ultrafilters in the proof of Theorem \ref{thm:wtp} to show there exists an extension to a $WTP$ ith $\mathcal{M}$ as $f$-system. The existence of free ultrafilters is guaranteed by the Boolean Prime Ideal Theorem, which can not be proven in ZF set theory, but is weaker than the axiom of choice \cite{halpern}. The existence of a atomfree or nonprincipal (i.e. every singleton has measure zero) finite additive measure defined on the power set of $\mathbb{N}$ cannot be established in ZF alone \cite{solovay}. Consequently, a version of the axiom of choice is always necessary to construct a probability measure on $\mathcal{M}$ that assigns measure zero to all bounded intervals.
\end{remark}

We do not only want an element of $WTC$, but a canonical one. This is guaranteed by the following theorem.
\begin{theorem}
\label{thm:unique}
The pair $(\mathcal{A}^\mathrm{uni},\alpha)$ is canonical with respect to both $WT$ and $WTC$.
\end{theorem}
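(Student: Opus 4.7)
The plan is as follows. Let $A \in \mathcal{A}^\mathrm{uni}$ and let $(\mathcal{F}',\mu') \in WT$ with $A \in \mathcal{F}'$, and write $p := \alpha(A)$. Since $WTC \subseteq WT$, proving $\mu'(A) = p$ handles both collections simultaneously.

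First, I would establish the key identity
\[
\mu'(C \circ B) = \lambda(C)\,\mu'(B) \qquad \text{for every } C \in \mathcal{C},\ B \in \mathcal{F}'.
\]
This combines property P1 (which gives $\mu'(C \circ B) = \mu'(C)\mu'(B)$) with Proposition~\ref{prop:muandrho} applied to $C \in \mathcal{C}$: since $\rho_C$ converges to $\lambda(C)$, the proposition forces $\mu'(C) = \lambda(C)$.

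Second, fix $\epsilon > 0$ and exploit the uniform logarithmic density of $A$: there exists $D_0 = D_0(\epsilon) \geq 1$ such that $\bigl|\tfrac{1}{\log D}\int_x^{xD}\tfrac{1_A(y)}{y}\,dy - p\bigr| \leq \epsilon$ for all $D \geq D_0$ and $x \geq 1$. The plan is then to construct a natural-density set $C_\epsilon \in \mathcal{C}$ with $\lambda(C_\epsilon) > 0$ — tuned to the scale $D_0$ — together with two natural-density sets $B^\pm_\epsilon \in \mathcal{C}$ of density $\lambda(C_\epsilon)(p \pm \epsilon)$, satisfying the pointwise inequalities
\[
S_{B^-_\epsilon}(x) \;\leq\; S_{C_\epsilon \circ A}(x) \;\leq\; S_{B^+_\epsilon}(x) \qquad \text{for all } x \in [0,\infty).
\]
Once this is in hand, property P2 combined with the identity above yields
\[
\lambda(C_\epsilon)(p - \epsilon) = \mu'(B^-_\epsilon) \leq \mu'(C_\epsilon \circ A) = \lambda(C_\epsilon)\,\mu'(A) \leq \mu'(B^+_\epsilon) = \lambda(C_\epsilon)(p + \epsilon).
\]
Dividing by $\lambda(C_\epsilon) > 0$ and letting $\epsilon \downarrow 0$ gives $\mu'(A) = p = \alpha(A)$, which is the desired canonicity.

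The main obstacle is clearly the third step: the construction of $C_\epsilon$ and the verification of the pointwise $S$-sandwich. A typical $A \in \mathcal{A}^\mathrm{uni} \setminus \mathcal{C}$ has linear density $\rho_A$ that oscillates far from $p$, so one cannot directly sandwich $S_A$ between linear functions of slopes $p\pm\epsilon$; Proposition~\ref{prop:muandrho} by itself is strictly too weak. The role of the thinning by $C_\epsilon$ — chosen to be a union of geometric blocks of ratio comparable to $D_0$, with lengths calibrated so that $S_{C_\epsilon}$ is very close to a linear function of slope $\lambda(C_\epsilon)$ — is precisely to \emph{convert} the multiplicatively uniform logarithmic density of $A$ into an additively uniform linear density of $C_\epsilon \circ A$. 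Pushing the uniform log-density of $A$ through the map $S_{C_\epsilon}^{-1}$ should reduce the linear-density oscillation of $C_\epsilon \circ A$ to an $\epsilon$-error, which then makes the $S$-sandwich by the simple arithmetic-progression-type sets $B^\pm_\epsilon$ available. This quantitative transfer from multiplicative uniformity to additive uniformity is the heart of the argument and identifies the logarithmic density as the canonical WTP-value on $\mathcal{A}^\mathrm{uni}$.
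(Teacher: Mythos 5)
Your steps 1 and 2 are sound and do correspond to ingredients of the paper's argument (Proposition~\ref{prop:muandrho} forces $\mu'(C)=\lambda(C)$ on $\mathcal{C}$, and P1 then gives $\mu'(C\circ B)=\lambda(C)\mu'(B)$; also, since $A\in\mathcal{C}$ is already settled by Proposition~\ref{prop:muandrho}, only $A\in\mathcal{A}^\mathrm{uni}\setminus\mathcal{C}$ is at stake). But step 3, which you correctly identify as the heart, is not merely an obstacle: it is impossible as formulated. For any $C\in\mathcal{C}$ with $\lambda(C)=c>0$ one has the exact identity $\rho_{C\circ A}(x)=\rho_C(x)\,\rho_A(S_C(x))$ (this computation appears in the paper's proof of Theorem~\ref{thm:wtp}), and since $S_C$ is continuous, nondecreasing and unbounded while $\rho_C(x)\to c$, it follows that $\limsup_{x}\rho_{C\circ A}(x)=c\,\limsup_{y}\rho_A(y)$ and $\liminf_{x}\rho_{C\circ A}(x)=c\,\liminf_{y}\rho_A(y)$. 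A pointwise sandwich $S_{B^-_\epsilon}\leq S_{C_\epsilon\circ A}\leq S_{B^+_\epsilon}$ with $B^\pm_\epsilon\in\mathcal{C}$ of densities $c(p\pm\epsilon)$ would therefore force $p-\epsilon\leq\liminf_y\rho_A(y)\leq\limsup_y\rho_A(y)\leq p+\epsilon$; for $A\notin\mathcal{C}$ we have $\liminf\rho_A<\limsup\rho_A$, so for $\epsilon$ small no choice of $C_\epsilon$, however cleverly calibrated, can work. The underlying intuition is the problem: thinning by a \emph{positive-density} $C$ does not convert multiplicative uniformity into additive uniformity; it only rescales $\rho_A$ in time and amplitude, and the logarithmic average is invariant under exactly such rescalings (this invariance is why P1 holds for $\alpha$ in the first place), so the oscillation of $\rho_A$ survives intact in $C\circ A$. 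Taking $C$ with genuinely geometric spacing so that $S_C$ grows logarithmically would tame the oscillation, but then $\lambda(C)=0$ and the identity $\mu'(C\circ A)=\lambda(C)\mu'(A)$ carries no information about $\mu'(A)$.

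The paper's proof shows what is needed instead: not one thinned copy but a disjoint union of many staggered copies, $\hat{A}=\bigcup_{p=0}^{n}\bigcup_{k=1}^{\lfloor C^p\rfloor}Z(p,k)\circ A$ with $Z(p,k)\in\mathcal{C}$ arranged so that on each multiplicative block $[C^{u+j-1},C^{u+j})$ the mass of $\hat{A}$ is a Ces\`aro average of the block averages $\tau_A(C,j+n-p)$, $p=0,\dots,n$, of $A$ over $n+1$ consecutive scales. It is this averaging over a growing number of multiplicative scales, not a single rescaling, that exploits membership in $\mathcal{A}^\mathrm{uni}$: by Lemma~\ref{lem:UlogA} the running averages $U^*(C,A)$ converge to $U(\log A)=L(\log A)=\alpha(A)$ as $n\to\infty$ and $C\downarrow1$. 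The role you assigned to P2 and the sandwich is played by Lemma~\ref{lem:ineqforB} applied to $\hat{A}$ (its proof is precisely a comparison, via Proposition~\ref{prop:muandrho}, with a geometric-block comparator set), while P1 gives $\mu(\hat{A})=\mu(A)\sum_{p,k}\lambda(Z(p,k))$; combining and taking limits yields $\mu(A)\leq U(\log A)$, and complementation gives the lower bound. To salvage your outline you would have to replace the single $C_\epsilon$ by such an averaging family of thinnings; without that device the quantitative transfer you hope for does not exist.
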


The pair $(\mathcal{A}^\mathrm{uni},\alpha)$ is maximal in the sense that it contains every pair that is canonical with respect to $WT$ or $WTC$.
\begin{theorem}
\label{thm:maximal}
If $(\mathcal{F},\mu) \in WT$ is canonical with respect to $WT$, then $\mathcal{F} \subseteq \mathcal{A}^\mathrm{uni}$. If $(\mathcal{F},\mu) \in WTC$ is canonical with respect to $WTC$, then $\mathcal{F} \subseteq \mathcal{A}^\mathrm{uni}$.
\end{theorem}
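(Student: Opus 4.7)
The plan is to prove both assertions by contrapositive. Fix $A \in \mathcal{F}$ with $A \notin \mathcal{A}^\mathrm{uni}$; I will exhibit two coherent WTPs $(\mathcal{M}, \mu_1), (\mathcal{M}, \mu_2) \in WTC \subseteq WT$ satisfying $\mu_1(A) \neq \mu_2(A)$. Since $A \in \mathcal{M}$ lies in both $f$-systems, canonicity of $(\mathcal{F}, \mu)$ with respect to $WT$ (or $WTC$) would force $\mu(A) = \mu_1(A) = \mu_2(A)$, a contradiction that settles both cases simultaneously.

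The hypothesis $A \notin \mathcal{A}^\mathrm{uni}$ unpacks via Definition \ref{def:Aunialpha} into $\log A \notin \mathcal{W}^\mathrm{uni}$, i.e.\ $L(\log A) < U(\log A)$. From the definitions of $L$ and $U$ one can pick $\ell < u$ in this interval together with pairs $(D_n^\ell, x_n^\ell)$ and $(D_n^u, x_n^u)$ with $D_n^\ell, D_n^u \to \infty$ along which $\sigma_{\log A}(D_n^\ell, x_n^\ell) \to \ell$ and $\sigma_{\log A}(D_n^u, x_n^u) \to u$. These two sequences are the witnesses that different weakly thinnable pairs can disagree on $A$.

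The construction of $\mu_1, \mu_2$ then reuses the ultrafilter machinery from the proof of Theorem \ref{thm:wtp}. That proof shows that a suitable free ultrafilter on the index set parametrizing the averages $\sigma_{\log(\cdot)}(D, x)$ yields a coherent WTP on $\mathcal{M}$ extending $(\mathcal{A}^\mathrm{uni}, \alpha)$. On $\mathcal{A}^\mathrm{uni}$ the extension is forced to equal $\alpha$ by the uniform-in-$x$ convergence built into the definition of $\mathcal{W}^\mathrm{uni}$; but on sets outside $\mathcal{A}^\mathrm{uni}$ the assigned value depends on where the ultrafilter concentrates. Selecting two free ultrafilters whose traces contain tails of the $\ell$-witness and $u$-witness sequences respectively then yields $\mu_2(A) = \ell$ and $\mu_1(A) = u$.

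The main obstacle lies precisely in making the previous paragraph rigorous: one must confirm that the ultrafilter construction is genuinely flexible, in the sense that \emph{any} free ultrafilter refining the cofinite filter on the index net produces a WTP satisfying all three properties of Definition \ref{def:wtp} together with coherence. Conditions P2, P3, and the extension of $\alpha$ survive ultrafilter limits routinely, but the multiplicative identity P1, asserting $\mu(C \circ A) = \mu(C)\mu(A)$ for $C \in \mathcal{C}$ and $A \in \mathcal{M}$, relies on an asymptotic factorization of $\sigma_{\log(C \circ A)}$ against $\sigma_{\log A}$ uniformly along the index net. This factorization is the technical heart of Theorem \ref{thm:wtp}, and re-examining it in the stronger form needed here --- uniform validity over a rich family of ultrafilters, so that the two witness-based choices truly produce distinct but valid WTPs --- is where the real work of the proof sits.
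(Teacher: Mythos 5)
Your contrapositive strategy is essentially the paper's, and it works; the difference lies in which witness pairs you build and how much machinery you spend on them. The paper takes $A\in\mathcal{F}\setminus\mathcal{A}^\mathrm{uni}$, uses Lemma \ref{lem:rep1} to get $(s,f)\in\mathcal{P}$ with $\liminf_n\xi_A(s_n,f_n)\neq\limsup_n\xi_A(s_n,f_n)$, passes to two subsequences $(s',f')$ and $(s'',f'')$ along which $\xi_A$ converges to the two distinct values, and then simply cites the fact, already established inside the proof of Theorem \ref{thm:wtp}, that $(\mathcal{A}^{s,f},\alpha^{s,f})\in WTC$ for \emph{every} $(s,f)\in\mathcal{P}$; these two restricted pairs already destroy canonicity with respect to both $WT$ and $WTC$, with no ultrafilters needed in this step. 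You instead insist on full-domain witnesses $(\mathcal{M},\mu_1)$, $(\mathcal{M},\mu_2)$, which is legitimate but makes the step you flag as the ``main obstacle'' look harder than it is: no uniformity over a rich family of ultrafilters is required. Load the disagreement into the choice of subsequence rather than into the ultrafilter: if you take the $\ell$-witness subsequence (transported from $\sigma_{\log A}$ to $\xi_A$ via Lemma \ref{lem:rep1}) as your $(s,f)$, then $\xi_A(s_n,f_n)$ converges to $\ell$, so every free ultrafilter limit along it equals $\ell$ (and likewise $u$ for the other subsequence), and the verification that the resulting ultrafilter-limit measure on $\mathcal{M}$ is a coherent WTP is exactly the one carried out in the proof of Theorem \ref{thm:wtp} for an arbitrary free ultrafilter and arbitrary $(s,f)\in\mathcal{P}$: the asymptotic factorization leading to (\ref{eq:wtpasym}) is a statement about the sequences themselves, independent of the ultrafilter, P1 then follows from multiplicativity of the ultrafilter limit, and coherence follows from de Finetti's criterion because $\mathcal{M}$ is an algebra. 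With that adjustment your argument closes and even yields slightly more than the paper's (explicit disagreeing pairs defined on all of $\mathcal{M}$), at the cost of invoking the ultrafilter construction where the paper's shorter route, quoting $(\mathcal{A}^{s',f'},\alpha^{s',f'})$ and $(\mathcal{A}^{s'',f''},\alpha^{s'',f''})$ directly, avoids it.
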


\section{Generalization to metric spaces}
\label{sec:general}

In this section we derive probability pairs on a class of metric spaces that are analogous to $(\mathcal{A}^\mathrm{uni},\alpha)$. Of course one could also try to construct such a probability measure by working more directly on these metric spaces, instead of constructing a derivative of $(\mathcal{A}^\mathrm{uni},\alpha)$. Since probability pairs on $[0,\infty)$, motivated from the problem of a uniform probability measure on $\mathbb{N}$, is the priority of this paper, we do not make such an effort here.

Let us first sketch the idea of the generalization. Let $A \in \mathcal{M}$. Whether $A$ is in $\mathcal{A}^\mathrm{uni}$ depends completely on the asymptotic behavior of $\rho_A$ (Lemma \ref{lem:rep1}). If $A \in \mathcal{A}^\mathrm{uni}$, then also $\alpha(A)$ only depends on the asymptotic behavior of $\rho_A$ (Lemma \ref{lem:rep1}). Now suppose that on a space $X$, we can somehow define a density functions $\bar{\rho}_B:[0,\infty)\rightarrow [0,1]$ for (some) subsets $B \subseteq X$ in a canonical way. Then, by replacing $\rho$ by $\bar{\rho}$, we get the analogue of $(\mathcal{A}^\mathrm{uni},\alpha)$ in $X$. The goal of this section is to make this idea precise.

Let $(X,d)$ be a metric space. For $x \in X$ and $r \geq 0$, write 
\begin{equation}
B(x,r) := \{ y \in X \;:\; d(y,x)<r \}.
\end{equation}
Write $\mathcal{B}(X)$ for the Borel $\sigma$-algebra of $X$. We need a ``uniform'' measure on this space to measure density of subsets in open balls. It is clear that the measure of an open ball should at least be independent of where in the space we look, i.e. it should only depend on the radius of the ball. This leads to the following definition.

\begin{definition}
\label{def:uniformmeasure}
We say that a Borel measure $\nu$ on $X$ is uniform if for all $r > 0$ and $x,y \in X$ we have
\begin{equation}
\label{eq:openballsequal}
0< \nu(B(x,r))=\nu(B(y,r))<\infty.
\end{equation}
\end{definition}

On $\mathbb{R}^n$ with Euclidean metric, the standard Borel measure as obtained by assigning to a product of intervals the product of the lengths of those intervals, is a uniform measure. In general, on normed locally compact vector spaces, the invariant measure with respect to vector addition, as given by the Haar measure, is a uniform measure.

A result by Christensen \cite{christensen} tells us that uniform measures that are Radon measures are unique up to multiplicative constants on locally compact metric spaces. This, however, does not cover all cases. The set of irrational numbers, for example, is not locally compact, but the Lebesgue measure restricted to Borel sets of irrational numbers is a uniform measure and unique up to a multiplicative constant. We give a slightly more general version of the result of Christensen.

\begin{proposition}
\label{prop:uniformunique}
If $\nu_1$ and $\nu_2$ are two uniform measures on $X$, then there exists some $c>0$ such that $\nu_1=c \nu_2$.
\end{proposition}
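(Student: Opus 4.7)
By the defining property of uniformity, $\nu_i(B(x,r))$ does not depend on $x$, so I set $h_i:(0,\infty)\to(0,\infty)$, $h_i(r):=\nu_i(B(x,r))$, both strictly positive and finite. The proposition then reduces to showing that the ratio $h_1(r)/h_2(r)$ is a constant $c>0$ independent of $r$, followed by an extension from open balls to all of $\mathcal{B}(X)$.

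For the core equality $h_1/h_2 \equiv c$, I plan a packing--covering argument. Fix $x_0 \in X$, $r>0$, and $\epsilon \in (0,r)$, and let $N(\epsilon,r)$ denote the cardinality of a maximal $2\epsilon$-separated subset of $B(x_0,r)$. The open balls of radius $\epsilon$ centred at these points are pairwise disjoint and contained in $B(x_0,r+\epsilon)$, giving $N(\epsilon,r)\, h_i(\epsilon) \leq h_i(r+\epsilon)$, while by maximality the balls of radius $2\epsilon$ around them cover $B(x_0,r)$, giving $h_i(r) \leq N(\epsilon,r)\, h_i(2\epsilon)$. Since $N(\epsilon,r)$ is purely geometric and hence the same for $i=1,2$, the resulting sandwich bounds
\begin{equation*}
\frac{h_i(r)}{h_i(2\epsilon)} \leq N(\epsilon,r) \leq \frac{h_i(r+\epsilon)}{h_i(\epsilon)} \qquad (i=1,2)
\end{equation*}
link the values of $h_1$ and $h_2$ at scales $r,\,r+\epsilon,\,\epsilon,\,2\epsilon$. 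Passing to $\epsilon \to 0^+$, using monotonicity and one-sided continuity of $h_i$ off the at-most-countable set of jump radii, I would conclude $h_1(r)/h_2(r) = h_1(r')/h_2(r')$ for all $r,r' > 0$.

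Once $h_1 = c\, h_2$ is established, I would extend $\nu_1 = c\nu_2$ from open balls to $\mathcal{B}(X)$. Every open set in a metric space is a countable union of open balls, so inclusion--exclusion extends the identity to the algebra generated by open balls; standard uniqueness-of-extension for $\sigma$-finite measures agreeing on a generating algebra (together with outer regularity for the Borel measures at hand) then carries the identity to all Borel sets.

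The main obstacle will be the limit step inside the sandwich: the bounds directly control $h_i$ at scales $\epsilon$ and $2\epsilon$ rather than a single small scale, so one has to iterate or pass to a convenient subsequence in order to collapse the ratios to one common constant, and one must cope with possible positive mass on spheres. If this direct route proves too delicate, my fallback is to invoke Christensen's theorem after reducing the problem to a locally-compact setting via a completion or embedding of $X$, consistent with the remark in the excerpt that the present statement is only a mild sharpening of Christensen's result.
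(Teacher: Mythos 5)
Your plan has two genuine gaps, and it is quite different from the argument the paper actually uses. First, the packing--covering sandwich cannot be collapsed to an exact constant. Your two bounds control $N(\epsilon,r)$ by ratios of $h_i$ at \emph{different} small scales ($\epsilon$ versus $2\epsilon$): combining them only yields inequalities of the form $h_1(r)/h_2(r+\epsilon) \leq h_1(2\epsilon)/h_2(\epsilon)$, and the factor $h_i(2\epsilon)/h_i(\epsilon)$ does not disappear as $\epsilon \to 0$. Already in $\mathbb{R}^n$ with $\nu_1=\nu_2=$ Lebesgue measure the upper and lower bounds differ by a factor $2^n$, so the method at best gives comparability of $h_1$ and $h_2$ up to a small-scale doubling constant, not proportionality; and uniform measures need not be doubling at all (counting measure on a regular tree, which the paper itself uses in Remark \ref{rem:asymptotic}, or product constructions), so the slack factor need not even be bounded. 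This is exactly the obstacle you flag yourself, but no amount of iterating or choosing subsequences removes the intrinsic radius gap between packing and covering. Second, even granting $h_1 = c\,h_2$, your extension step fails: open balls do not form a $\pi$-system, inclusion--exclusion would require the measures of intersections of balls (which are not balls and are not determined by $h$), and in fact two distinct Borel measures on a compact metric space can agree on \emph{all} balls (Davies's example), so ``agreement on balls plus uniqueness of extension'' is not a valid route to $\nu_1 = c\,\nu_2$ on $\mathcal{B}(X)$. Your fallback—complete or embed $X$ into a locally compact space and cite Christensen—is also not a proof: the completion need not be locally compact, the uniform structure of the measure need not transfer, and the whole point of the proposition is to cover cases (such as the irrationals) outside Christensen's Radon/locally compact setting.

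The paper's proof avoids both issues by working with a set $A$ directly rather than with $h$ alone: for open $A$ of finite measure and radii $r_n \downarrow 0$ at which $h_2$ is continuous, it applies Fatou's lemma to $f_n(x) = 1_A(x)\,\nu_2(A \cap B(x,r_n))/h_2(r_n)$ and then Fubini, using the symmetry $1_{B(x,r)}(y) = 1_{B(y,r)}(x)$, to get $\nu_1(A) \leq \liminf_n \bigl(h_1(r_n)/h_2(r_n)\bigr)\,\nu_2(A)$ with both measures evaluated at the \emph{same} radius; interchanging $\nu_1$ and $\nu_2$ gives the reverse bound, hence $\nu_1(A) = c\,\nu_2(A)$ on open sets of finite measure, then on all open sets by exhaustion, and agreement on open sets (a $\pi$-system) suffices for agreement on all Borel sets. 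If you want to salvage your write-up, you would essentially have to replace both halves of your plan with this averaging argument.
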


Proposition \ref{prop:uniformunique} gives us uniqueness, but not existence. To see that there are metric spaces without a uniform measure, consider the following example. Let $X$ be the set of vertices in a connected graph that is not regular. Let $d$ be the graph distance on $X$. If we suppose that $\nu$ is a uniform measure on $X$, from (\ref{eq:openballsequal}) with $r<1$ it follows that for some $C>0$ we have $\nu(\{x\})=C$ for every $x \in X$. But then $\nu(B(x,2))=C(1+\deg(x))$ for every $x\in V$, which implies (\ref{eq:openballsequal}) cannot hold for $r=2$ since the graph is not regular. A characterization of metric spaces on which a uniform measure exist, does not seem  to be present in the literature.

We now assume $X$ has a uniform measure $\nu$ and that $\nu(X)=\infty$. In addition to that, we write $h(r):= \nu(B(x,r))$ for $r\geq 0$ and assume that
\begin{equation}
\label{eq:assumptionX}
\forall C>0 \;\;\lim_{r \rightarrow \infty} \frac{h(r+C)}{h(r)}=1,
\end{equation}
which is equivalent with amenability in case $(X,d)$ is a normed locally compact vector space \cite{pier}. For the importance of this assumption, see Remark \ref{rem:asymptotic} below.

Set
\begin{equation}
\begin{aligned}
r^-(u) & := \sup \left\{r \in [0,\infty) \;:\; h(r) \leq u \right\},  \\
r^+(u) & := r^- + 1 \\
\end{aligned}
\end{equation}
for $u \in [0,\infty)$. Note that $h(r^-(u)) \leq u$ and $h(r^+(u)) \geq u$. Write $(X,\mathcal{L}(X),\bar{\nu})$ for the (Lebesgue) completion of $(X,\mathcal{B}(X),\nu)$. Fix some $o \in X$. For $A \in \mathcal{L}(X)$ define the map $\bar{\rho}_A: [0,\infty) \rightarrow [0,\infty)$ given by $\bar{\rho}_A(0):=0$ and
\begin{equation}
\label{eq:rhobar}
\bar{\rho}_A(u) := \frac{\bar{\nu}(B(o,r^-(u)) \cap A)}{h(r^-(u))}
\end{equation}
for $r>0$. The value $\bar{\rho}_A(u)$ is the density of $A$ in the biggest open ball around $o$ of at most measure $u$. Notice that $\bar{\rho}_A$ is independent of the choice of $\nu$ as a result of Proposition \ref{prop:uniformunique}. The function $\bar{\rho}_A$ does depend on the choice of $o$, but in Proposition \ref{prop:asymptotic} we show that the asymptotic behavior of $\bar{\rho}_A$ does not depend on the choice of $o$. We also show in Proposition \ref{prop:asymptotic} that the asymptotic behavior of $\bar{\rho}_A$ is not affected if we replace $r^-(u)$ by $r^+(u)$ in (\ref{eq:rhobar}). 
\begin{proposition}
\label{prop:asymptotic}
Fix $x,y \in  X$ and $A \in \mathcal{L}(X)$. Then
\begin{equation}
\frac{\bar{\nu}(B(x,r^-(u)) \cap A)}{h(r^-(u))} \sim \frac{\bar{\nu}(B(y,r^+(u)) \cap A)}{h(r^+(u))}.
\end{equation}
\end{proposition}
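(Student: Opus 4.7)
The plan is to prove the asymptotic equivalence by bridging the two expressions through the intermediate quantity $\bar{\nu}(B(x,r^+(u))\cap A)/h(r^+(u))$, doing one change at a time: first swap $r^-(u)$ for $r^+(u)$ keeping the base point $x$, then swap $x$ for $y$ keeping the radius $r^+(u)$. Before starting I would record two easy facts: since $\nu(X)=\infty$ and $X=\bigcup_{n}B(o,n)$, we have $h(n)=\nu(B(o,n))\to\infty$, and by monotonicity of $h$ this forces $r^-(u)\to\infty$ (hence $r^+(u)\to\infty$) as $u\to\infty$. Throughout, I use that $\bar{\nu}(B(z,r))=h(r)$ for every $z\in X$, which is exactly the uniformity hypothesis.

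For the first step, write $N_\pm=\bar{\nu}(B(x,r^\pm(u))\cap A)$ and $D_\pm=h(r^\pm(u))$. Since $B(x,r^-(u))\subseteq B(x,r^+(u))$, one has $0\le N_+-N_-\le D_+-D_-$, and the algebraic identity
\begin{equation}
\frac{N_+}{D_+}-\frac{N_-}{D_-}=\frac{(N_+-N_-)D_--N_-(D_+-D_-)}{D_+D_-}
\end{equation}
together with $N_-\le D_-$ gives the bound
\begin{equation}
\left|\frac{N_+}{D_+}-\frac{N_-}{D_-}\right|\le 2\left(1-\frac{D_-}{D_+}\right)=2\left(1-\frac{h(r^-(u))}{h(r^-(u)+1)}\right),
\end{equation}
which tends to $0$ by hypothesis (\ref{eq:assumptionX}) with $C=1$, since $r^-(u)\to\infty$.

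For the second step, set $C:=d(x,y)$. A triangle-inequality check shows that once $r^+(u)>C$,
\begin{equation}
B(x,r^+(u))\bigtriangleup B(y,r^+(u))\subseteq\bigl(B(x,r^+(u))\setminus B(x,r^+(u)-C)\bigr)\cup\bigl(B(y,r^+(u))\setminus B(y,r^+(u)-C)\bigr),
\end{equation}
and each set on the right has $\bar{\nu}$-measure exactly $h(r^+(u))-h(r^+(u)-C)$ by uniformity. Intersecting with $A$ and dividing through by $h(r^+(u))$ yields
\begin{equation}
\left|\frac{\bar{\nu}(B(x,r^+(u))\cap A)}{h(r^+(u))}-\frac{\bar{\nu}(B(y,r^+(u))\cap A)}{h(r^+(u))}\right|\le 2\left(1-\frac{h(r^+(u)-C)}{h(r^+(u))}\right),
\end{equation}
which again vanishes as $u\to\infty$ by (\ref{eq:assumptionX}). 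Combining the two steps via the triangle inequality on real numbers completes the proof.

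The only real obstacle is the first step: one must see that the annulus measure $D_+-D_-$ controls both the mismatch of the numerators and the renormalization gap between the denominators, so that the whole estimate reduces to a single appeal to $h(r+1)/h(r)\to 1$. The second step is essentially geometric and uses uniformity in a clean way; the preliminary lemma that $h\to\infty$ is immediate from $\sigma$-additivity and the metric-space covering $X=\bigcup_n B(o,n)$.
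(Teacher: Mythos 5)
Your proof is correct and follows essentially the same route as the paper: split the comparison into a radius swap (controlled by $h(r^-(u)+1)/h(r^-(u))\to 1$ from (\ref{eq:assumptionX})) and a center swap (controlled by a symmetric-difference/annulus estimate using uniformity), then combine. You are in fact slightly more explicit than the paper, spelling out the algebraic bound $|N_+/D_+-N_-/D_-|\le 2(1-D_-/D_+)$, the fact that $r^-(u)\to\infty$, and the correct factor-2 annulus bound for $B(x,r)\triangle B(y,r)$, but the underlying argument is the same.
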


\begin{remark}
\label{rem:asymptotic}
Proposition \ref{prop:asymptotic} is not necessarily true if we do not assume (\ref{eq:assumptionX}), as illustrated by the following example. Suppose $X$ is the set of vertices of a $3$-regular tree graph and $d$ is the graph distance. Let $\nu$ be the counting measure, which is a uniform measure on this metric space. Then clearly (\ref{eq:assumptionX}) is not satisfied. Now pick any $x \in X$ and let $y$ be a neighbor of $x$. Let $A \subseteq \mathcal{P}(X)$ be the connected component containing $y$ in the graph where the edge between $x$ and $y$ is removed. Then
\begin{equation}
\lim_{r \rightarrow \infty} \frac{\bar{\nu}(B(x,r) \cap A)}{h(r)} = 1/3 \;\mathrm{and} \; \lim_{r \rightarrow \infty} \frac{\bar{\nu}(B(y,r) \cap A)}{h(r)} = 2/3.
\end{equation}
\end{remark}

Proposition \ref{prop:asymptotic} justifies the use of $\bar{\rho}$ to determine the density, since its asymptotic behavior is canonical. So, we define for $A \in \mathcal{L}(X)$ the map $\bar{\xi}_A: (1,\infty)^2 \rightarrow [0,1]$ given by
\begin{equation}
\bar{\xi}_A(D,x) :=  \frac{1}{\log(D)} \int_x^{Dx} \frac{\bar{\rho}_A(y)}{y} \mathrm{d}y.
\end{equation}
Then we set
\begin{equation}
\label{eq:AuniX}
\mathcal{A}^\mathrm{uni}(X) := \left\{ A \in \mathcal{L}(X) \;:\; \limsup_{D \rightarrow \infty} \sup_{x>1} \bar{\xi}_A(D,x) = \liminf_{D \rightarrow \infty} \inf_{x>1} \bar{\xi}_A(D,x) \right\}
\end{equation}
and $\alpha^X: \mathcal{A}^\mathrm{uni}(X)  \rightarrow [0,1]$ by
\begin{equation}
\label{eq:alphaX}
\alpha^X(A) := \limsup_{D \rightarrow \infty} \sup_{x \in (1,\infty)} \bar{\xi}_A(D,x) = \liminf_{D \rightarrow \infty} \inf_{x \in (1,\infty)} \bar{\xi}_A(D,x).
\end{equation}
The pair $(\mathcal{A}^\mathrm{uni}(X),\alpha^X)$ gives us the analogue of $(\mathcal{A}^\mathrm{uni},\alpha)$ in $X$. In particular, it gives for $X=\mathbb{N}$ the corresponding uniform probability measure on $\mathbb{N}$ we initially searched for. In case of Euclidean space, we have the following expression for $(\mathcal{A}^\mathrm{uni}(X),\alpha^X)$, which in the special case of $X=\mathbb{R}$ gives us an extension of $\alpha$ ($\mathcal{A}^\mathrm{uni}(\mathbb{R})$ is the maximal sub-$f$-system of $\mathcal{L}(\mathbb{R})$, where $\mathcal{A}^\mathrm{uni} \subseteq \mathcal{A}^\mathrm{uni}(\mathbb{R})$ is the maximal sub-$f$-system of $\mathcal{M}$).

\begin{proposition}
\label{prop:euclideanspace}
Suppose $X=\mathbb{R}^n$ and $d$ is Euclidean distance. Let $\sigma$ be the surface measure on the unit sphere in $\mathbb{R}^n$. Then for $A \in \mathcal{L}(\mathbb{R}^n)$ we can replace $\bar{\xi}_A(D,x)$ in (\ref{eq:AuniX}) and (\ref{eq:alphaX}) by 
\begin{equation}
\frac{1}{\log(D)}\int_x^{Dx} \frac{K_A(y)}{y} \mathrm{d}y,
\end{equation}
where $K_A:[0,\infty) \rightarrow [0,1]$ is given by
\begin{equation}
K_A(r) := \frac{\Gamma(n/2)}{2 \pi^{n/2}} \int_{S^{n-1}} 1_A(ru) \sigma(\mathrm{d}u).
\end{equation}
\end{proposition}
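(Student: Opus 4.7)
The plan is to express $\bar{\rho}_A$ explicitly in $\mathbb{R}^n$ using polar coordinates, change the integration variable in $\bar{\xi}_A(D,x)$ from the measure-scale $y$ to the radius $r$, and then apply Fubini to recognize the result as the logarithmic mean of $K_A$ plus an error that vanishes uniformly at rate $1/\log D$. Since $h(r) = c_n r^n$ with $c_n = \pi^{n/2}/\Gamma(n/2+1)$ is continuous and strictly increasing, $r^-(u) = (u/c_n)^{1/n}$. Polar coordinates give
\[
\bar{\nu}(B(o,r) \cap A) = \int_0^r s^{n-1} \int_{S^{n-1}} 1_A(su)\,\sigma(du)\,ds = \omega_{n-1} \int_0^r s^{n-1} K_A(s)\,ds,
\]
with $\omega_{n-1} = 2\pi^{n/2}/\Gamma(n/2) = n c_n$ the surface area of $S^{n-1}$, and hence $\bar{\rho}_A(c_n r^n) = (n/r^n)\int_0^r s^{n-1} K_A(s)\,ds$.

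Substituting $y = c_n r^n$ in $\bar{\xi}_A(D,x)$, so that $dy/y = n\,dr/r$ and $\log D = n\log D'$ with $D' := D^{1/n}$, and setting $a := (x/c_n)^{1/n}$, we obtain
\[
\bar{\xi}_A(D,x) = \frac{n}{\log D'} \int_a^{D'a} \frac{1}{r^{n+1}} \int_0^r s^{n-1} K_A(s)\,ds\,dr.
\]
Swapping the order of integration and evaluating the inner integral $\int_{\max(s,a)}^{D'a} r^{-n-1}\,dr$ explicitly leads, after collecting the $s\in[0,a]$ and $s\in[a,D'a]$ pieces, to
\[
\bar{\xi}_A(D,x) = \frac{1}{\log D'} \int_a^{D'a} \frac{K_A(s)}{s}\,ds + E(D',a),
\]
where $E$ is a sum of a contribution from the $s<a$ tail and a boundary term of the form $-(D'a)^{-n}\int_a^{D'a}K_A(s)s^{n-1}\,ds/\log D'$; using $0\le K_A\le 1$, each of these is bounded in absolute value by $1/(n\log D')$, so $|E(D',a)|\le 2/(n\log D')$ uniformly in $a > 0$.

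It remains to reconcile parameter ranges. The maps $D\mapsto D'$ and $x\mapsto a$ are bijections $(1,\infty)\to (1,\infty)$ and $(1,\infty)\to (c_n^{-1/n},\infty)$ respectively, so together with the uniform bound on $E$,
\[
\limsup_{D\to\infty} \sup_{x>1}\bar{\xi}_A(D,x) = \limsup_{D'\to\infty} \sup_{a>c_n^{-1/n}} \frac{1}{\log D'} \int_a^{D'a}\frac{K_A(s)}{s}\,ds.
\]
A telescoping estimate shows that for any $c>0$, $\bigl|\log(D)^{-1}\int_{ca}^{cDa}K_A(s)/s\,ds - \log(D)^{-1}\int_a^{Da}K_A(s)/s\,ds\bigr| \le 2|\log c|/\log D$ uniformly in $a$; taking $c = c_n^{-1/n}$ and using continuity in $a$ shows this $\limsup$ equals $\limsup_{D\to\infty}\sup_{x>1}\log(D)^{-1}\int_x^{Dx}K_A(y)/y\,dy$. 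The verbatim argument with $\liminf$ and $\inf$ gives the analogous equality, and these two equalities together justify replacing $\bar{\xi}_A(D,x)$ by $\log(D)^{-1}\int_x^{Dx}K_A(y)/y\,dy$ in both (\ref{eq:AuniX}) and (\ref{eq:alphaX}). The main obstacle is the Fubini/error-term step; once the uniform bound $|E|\le C/\log D$ is established, the rest is a matter of tracking the two changes of variable and the change of sup/inf range.
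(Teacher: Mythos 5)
Your proposal is correct and takes essentially the same route as the paper: you express $\bar{\rho}_A$ through $K_A$ in polar coordinates, substitute $y=h(r)$, and show the discrepancy with the logarithmic mean of $K_A$ is bounded by a constant times $1/\log D$ uniformly in the spatial parameter, your Fubini step yielding exactly the same error term that the paper obtains by integration by parts. The only difference is that you make explicit the reparametrization of the $\sup$/$\inf$ ranges in $D$ and $x$, a step the paper leaves implicit with ``the desired result follows.''
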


\section{Proofs}
\label{sec:proofs}

First we show that every $f$-system of a WTP is closed under translation and that every probability measure of a WTP is invariant under translation.

\begin{lemma}
\label{lem:trans}
Let $(\mathcal{F},\mu)$ be a WTP. Let $A \in \mathcal{F}$ and $c \in [0,\infty)$. Then
\begin{equation}
A' := \{ c + a \;:\; a \in A \} \in \mathcal{F}
\end{equation}
and $\mu(A)=\mu(A')$.
\end{lemma}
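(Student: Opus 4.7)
The key observation is that translation by $c$ can be realized as a thinning operation, namely $A' = [c,\infty) \circ A$. Once this is recognized, the lemma follows almost immediately from conditions P1 and P3 of Definition \ref{def:wtp}.

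First I would check that $C := [c,\infty)$ lies in $\mathcal{C}$. It belongs to $\mathcal{M}$ via the representation $C = \bigcup_{i \geq 1} [c+i-1, c+i)$, and the density satisfies $\rho_C(x) = \max(x-c,0)/x \to 1$ as $x \to \infty$, so $C \in \mathcal{C}$ with $\lambda(C) = 1$. Next I would compute $S_C(x) = m([c,\infty) \cap [0,x)) = \max(x-c,0)$, which by Definition \ref{def:thinnability} yields
\begin{equation*}
C \circ A = \{ x \in [0,\infty) : x \in C \text{ and } S_C(x) \in A \} = \{ c + a : a \in A \} = A',
\end{equation*}
where I use that $A \subseteq [0,\infty)$ so that $x - c \in A$ forces $x \geq c$ automatically.

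Now I apply P1 of Definition \ref{def:wtp} with this $C \in \mathcal{C}$ and the given $A \in \mathcal{F}$ to conclude that $A' = C \circ A \in \mathcal{F}$ together with $\mu(A') = \mu(C)\mu(A)$. Condition P3 gives $\mu(C) = \mu([c,\infty)) = 1$, hence $\mu(A') = \mu(A)$, as required. There is no substantive obstacle here: the whole argument is a direct unpacking of definitions once one spots that translation coincides with $[c,\infty) \circ (\cdot)$. The only minor sanity checks are that $[c,\infty) \in \mathcal{M}$ and the elementary evaluation of $S_{[c,\infty)}$.
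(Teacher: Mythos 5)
Your proof is correct and follows essentially the same route as the paper: both identify $A'$ as $[c,\infty)\circ A$ with $[c,\infty)\in\mathcal{C}$, and then combine P1 with P3 (which gives $\mu([c,\infty))=1$) to conclude $A'\in\mathcal{F}$ and $\mu(A')=\mu(A)$. Your explicit verification that $S_{[c,\infty)}(x)=\max(x-c,0)$ and hence $[c,\infty)\circ A=A'$ is a welcome detail the paper leaves implicit.
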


\begin{proof}
Let $(\mathcal{F},\mu)$ be a WTP. Let $A \in \mathcal{F}$ and $c \in [0,\infty)$. Set $B:=[c,\infty)$. We have $B \in \mathcal{C} \subseteq \mathcal{F}$ and by P3 we have $\mu(B)=1$. Therefore, $A' = B \circ A \in \mathcal{F}$ and
\begin{equation}
\mu(A') = \mu(B)\mu(A)=\mu(A)
\end{equation}
by P1.
\qed
\end{proof}

\begin{proof} \textit{of Propositon \ref{prop:muandrho}} \;
Let $(\mathcal{F},\mu)$ be a WTP and $A \in \mathcal{F}$. Set $u := \limsup_{x \rightarrow \infty} \rho_A(x)$. If $u=1$ there is nothing to prove, so assume $u<1$. Let $\epsilon>0$ be given. Let $u' \in [0,1] \cap \mathbb{Q}$ such that $u'>u$ and $u'-u<\epsilon$. The idea is to construct a $Y \in \mathcal{M}$ such that we can easily see that $\mu(Y)=u'$ and $\rho_A(x) \leq \rho_Y(x)$ for all $x$, so that with P2 we get $\mu(A) \leq u'$.

First we observe that there is a $K>0$ such that for all $x\geq K$ we have $\rho_A(x) \leq u'$. We can write $u'$ as $u' = \frac{p}{q}$ for some $p,q \in \mathbb{N}_0$ with $p \leq q$. Now we introduce the set $Y$ given by
\begin{equation*}
Y := [0,K) \cup \bigcup_{i=0}^\infty [iq, iq + p).
\end{equation*}
Note that $Y \in \mathcal{C} \subseteq \mathcal{F}$. Lemma \ref{lem:trans} and the fact that $\mu$ is a probability measure, gives us that $\mu(Y)=u'$.  Further, observe that for each $x \in [0,\infty)$ we have $\rho_A(x) \leq \rho_Y(x)$, so with P2 we get
\begin{equation*}
\mu(A) \leq \mu(Y) = u' < u + \epsilon.
\end{equation*}
Letting $\epsilon \downarrow 0$ we find
\begin{equation*}
\mu(A) \leq u =  \limsup_{x \rightarrow \infty} \rho_A(x).
\end{equation*}
By applying this to $A^c$ we find
\begin{equation*}
\mu(A) = 1 - \mu(A^c) \geq 1 - \limsup_{x \rightarrow \infty} \rho_{A^c}(x) = \liminf_{x \rightarrow \infty} \rho_A(x).
\end{equation*}
\qed
\end{proof}

Before we prove Proposition \ref{prop:alphaextendslambda} and Theorem \ref{thm:wtp}, we present the following alternative representation of $(\mathcal{A}^\mathrm{uni},\alpha)$. We define for $A \in \mathcal{M}$ the map $\xi_A: (1,\infty)^2 \rightarrow [0,1]$ given by
\begin{equation}
\xi_A(D,x) := \frac{1}{\log(D)} \int_{x}^{Dx} \frac{\rho_A(y)}{y} \mathrm{d}y.
\end{equation}

Set
\begin{equation}
\mathcal{S} := \left\{ s \in (1,\infty)^\mathbb{N} \;:\; \lim_{n \rightarrow \infty} s_n = \infty \right\}.
\end{equation}
If $s \in \mathcal{S}$ and $f \in (1,\infty)^\mathbb{N}$, then we can interpret the pair $(s,f)$ as the sequence $(s_1,f_1),(s_2,f_2),\ldots$ in $(1,\infty)^2$. Write 
\begin{equation}
\mathcal{P} := \left\{ (s,f) \;:\; s \in \mathcal{S},\;\; f \in (1,\infty)^\mathbb{N} \right\}
\end{equation}
for the collection of all such sequences. 

For every $(s,f) \in \mathcal{P}$ we set
\begin{equation}
\mathcal{A}^{s,f} := \{ A \in \mathcal{M} \;:\; \lim_{n \rightarrow \infty} \xi_A(s_n,f_n) \; \mathrm{exists} \}
\end{equation}
and
\begin{equation}
\alpha^{s,f}(A) := \lim_{n \rightarrow \infty} \xi_A(s_n,f_n).
\end{equation}

\begin{lemma}[Alternate Representation]
\label{lem:rep1}
We have
\begin{equation}
\label{eq:Auni}
\mathcal{A}^\mathrm{uni} = \bigcap_{(s,f)\in\mathcal{P}} \mathcal{A}^{s,f}
\end{equation}
with for any $(s,f) \in \mathcal{P}$ and $A \in \mathcal{A}^\mathrm{uni}$
\begin{equation}
\label{eq:alpha}
\alpha(A) = \alpha^{s,f}(A).
\end{equation}
\end{lemma}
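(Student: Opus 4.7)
The plan is to reduce the statement to the behaviour of $\sigma_{\log A}$ by a logarithmic change of variables, and then invoke the definition of $\mathcal{W}^\mathrm{uni}$. Let me abbreviate $\eta_A(D,x) := \frac{1}{\log D}\int_x^{Dx}\frac{1_A(y)}{y}\,dy$; this is the expression used informally in (\ref{eq:introalpha}).

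The key calculation is that $\xi_A$ and $\eta_A$ are uniformly asymptotic. Since $y\rho_A(y) = \int_0^y 1_A(t)\,dt$, we have $\tfrac{d}{dy}\rho_A(y) = \tfrac{1_A(y) - \rho_A(y)}{y}$ for almost every $y>0$, and hence the fundamental theorem of calculus gives
\begin{equation*}
\int_x^{Dx}\frac{1_A(y) - \rho_A(y)}{y}\,dy = \rho_A(Dx) - \rho_A(x),
\end{equation*}
so $\sup_{x>1}|\xi_A(D,x) - \eta_A(D,x)| \leq \tfrac{2}{\log D} \to 0$. Moreover, the substitution $u = \log y$ (valid on $y\geq 1$), together with the identity $1_A(e^u) = 1_{\log A}(u)$ for $u\geq 0$, shows that for $x>1$
\begin{equation*}
\eta_A(D,x) = \frac{1}{\log D}\int_{\log x}^{\log x + \log D} 1_{\log A}(u)\,du = \sigma_{\log A}(\log D,\log x).
\end{equation*}
Combining, $\xi_A(D,x) = \sigma_{\log A}(\log D,\log x) + o(1)$ as $D\to\infty$, uniformly in $x>1$.

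Next I would invoke the general fact that $A \in \bigcap_{(s,f)\in\mathcal{P}}\mathcal{A}^{s,f}$ is equivalent to $\xi_A(D,x)$ converging uniformly in $x>1$ as $D\to\infty$, with common value equal to the uniform limit. The forward direction of this equivalence is immediate from the trivial inclusion of convergent sequences in a convergent function; the converse uses interleaving: choosing $(s,f)\in\mathcal{P}$ with $f$ alternating between two fixed values forces the pointwise limit of $\xi_A(D,\cdot)$ to be a constant, and interleaving a constant-$f$ sequence with a putative extremal sequence $(D_n,x_n)$ witnessing non-uniformity yields a non-convergent diagonal sequence in $\mathcal{P}$, a contradiction. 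By the previous paragraph this uniform convergence is in turn equivalent to uniform convergence of $\sigma_{\log A}(D',x')$ in $x'>0$ as $D'\to\infty$.

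Finally, since $\sigma_{\log A}$ takes values in $[0,1]$, uniform convergence of $\sigma_{\log A}(D',x')$ in $x'>0$ is exactly the condition $L(\log A)=U(\log A)$, i.e.\ $A \in \mathcal{A}^\mathrm{uni}$; and the uniform limit, read off at any fixed $x'>0$ via $\sigma_{\log A}(D',x') = \tfrac{1}{D'}(S_{\log A}(x'+D') - S_{\log A}(x'))$, equals $\lambda(\log A) = \alpha(A)$. The main obstacle in this plan is spotting the antiderivative identity $(y\rho_A)' = 1_A$, which makes the comparison of $\xi_A$ and $\eta_A$ collapse to a trivial boundary term of order $1/\log D$; once this is in place, everything else is careful but routine uniform-convergence bookkeeping.
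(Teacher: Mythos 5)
Your proposal is correct and takes essentially the same route as the paper: the central identity $\xi_A(D,x)=\sigma_{\log A}(\log D,\log x)-\frac{\rho_A(Dx)-\rho_A(x)}{\log D}$ with its uniform $O(1/\log D)$ error (the paper gets it by integrating $S_A'(u)/u$ by parts, you by differentiating $y\rho_A(y)$), followed by interleaving-of-sequences arguments to identify $\bigcap_{(s,f)\in\mathcal{P}}\mathcal{A}^{s,f}$ with the condition $L(\log A)=U(\log A)$ and to pin down the common value $\lambda(\log A)$. The only difference is organizational: the paper sandwiches the sequential limits between $L(\log A)$ and $U(\log A)$ and constructs near-extremal sequences to interleave, whereas you pass through uniform convergence to a constant as an intermediate characterization --- same ideas, same conclusion.
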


\begin{proof}
Let $A \in \mathcal{M}$. We start to relate $\sigma_{\log(A)}$ and $\xi_A$. If $D,x \in (1,\infty)$, then
\begin{equation}
\begin{aligned}
\sigma_{\log(A)}(\log(D),\log(x)) & = \frac{1}{\log(D)} \int_{\log(x)}^{\log(Dx)} 1_A(e^y) \mathrm{d}y \\
& = \frac{1}{\log(D)} \int_{x}^{Dx} \frac{1_A(u)}{u} \mathrm{d}u \\
&  = \frac{1}{\log(D)} \int_{x}^{Dx} \frac{ S_A'(u)}{u} \mathrm{d}u \\
&  = \frac{1}{\log(D)} \left( \frac{S_A(u)}{u} \biggr\rvert_{u=x}^{Dx} + \int_{x}^{Dx} \frac{ S_A(u)}{u^2} \mathrm{d}u \right)\\
&  = \frac{\rho_A(Dx)-\rho_A(x)}{\log(D)} + \xi_A(D,x).
\end{aligned}
\end{equation}
This implies that for $(s,f) \in \mathcal{P}$ we have
\begin{equation}
\mathcal{A}^{s,f} = \left\{ A \in \mathcal{M} \;:\; \lim_{n \rightarrow \infty} \sigma_{\log(A)}(\log(s_n),\log(f_n)) \;\mathrm{exists} \right\}
\end{equation}
with for $A \in \mathcal{A}^{s,f}$
\begin{equation}
\alpha^{s,f}(A) = \lim_{n \rightarrow \infty} \sigma_{\log(A)}(\log(s_n),\log(f_n)).
\end{equation}
Since for any $A \in \mathcal{M}$ and $(s,f) \in \mathcal{P}$
\begin{equation}
L(\log(A)) \leq \liminf_{n \rightarrow \infty} \sigma_{\log(A)}(\log(s_n),\log(f_n))
\end{equation}
and
\begin{equation}
\limsup_{n \rightarrow \infty} \sigma_{\log(A)}(\log(s_n),\log(f_n)) \leq U(\log(A)),
\end{equation}
we find that if $\log(A) \in \mathcal{W}^\mathrm{uni}$, then $A \in \mathcal{A}^{s,f}$ with $\alpha^{s,f}(A)=\alpha(A)$. 

The only thing left to show is that
\begin{equation}
\bigcap_{(s,f) \in \mathcal{P}} \mathcal{A}^{s,f}  \subseteq \mathcal{A}^\mathrm{uni}.
\end{equation}
So assume $A \in \bigcap_{(s,f) \in \mathcal{P}} \mathcal{A}^{s,f}$. Suppose we have $(s,f) \in \mathcal{P}$ such that $\alpha^{s,f}(A)=L(\log(A))$ and $(s',f') \in \mathcal{P}$ such that $\alpha^{s',f'}(A)=U(\log(A))$. Then we can create a new sequence given by
\begin{equation}
s'' := (s_1,s'_1,s_2,s'_2,...) \;\;\mathrm{and}\;\; f'' := (f_1,f'_1,f_2,f'_2,...).
\end{equation}
Because by assumption $A \in \mathcal{A}^{s'',f''}$, we then have $\alpha^{s,f}(A)=\alpha^{s',f'}(A)$. Hence $A \in \mathcal{A}^\mathrm{uni}$. So it is sufficient to show that we can choose $(s,f)$ and $(s',f')$ in the desired way.

Choose $s \in \mathcal{S}$ such that
\begin{equation}
\lim_{n \rightarrow \infty} \inf_{x \in (1,\infty)} \sigma_{\log(A)}(\log(s_n),\log(x)) = \liminf_{D \rightarrow \infty} \inf_{x \in (1,\infty)} \sigma_{\log(A)}(\log(D),\log(x)).
\end{equation}
Choose $f \in (1,\infty)^\mathbb{N}$ such that
\begin{equation}
\left| \inf_{x \in (1,\infty)} \sigma_{\log(A)}(\log(s_n),\log(x)) - \sigma_{\log(A)}(\log(s_n),\log(f_n))\right| < \frac{1}{n}
\end{equation}
for every $n \in \mathbb{N}$. Then $(s,f) \in \mathcal{P}$ with
\begin{equation}
\alpha^{s,f}(A) = \liminf_{D \rightarrow \infty} \inf_{x \in (1,\infty)} \sigma_{\log(A)}(\log(D),\log(x)) = L(\log(A)).
\end{equation}
In the same way choose $(s',f') \in \mathcal{P}$ such that
\begin{equation}
\alpha^{s',f'}(A) = \limsup_{D \rightarrow \infty} \sup_{x \in (1,\infty)} \sigma_{\log(A)}(\log(D),\log(x)) = U(\log(A)).
\end{equation}
\qed
\end{proof}

\begin{proof} \textit{of Proposition \ref{prop:alphaextendslambda}} \;
Let $A \in \mathcal{C}$ and $(s,f) \in \mathcal{P}$. Since $\rho_A(y) \rightarrow \lambda(A)$, we have $\xi_A(s_n,f_n) \sim \lambda(A)$, so $\alpha^{s,f}(A)=\lambda(A)$. The result now follows by Lemma \ref{lem:rep1}.
\qed
\end{proof}

\begin{proof} \textit{of Theorem \ref{thm:wtp}} \;
Notice that any intersection of $f$-systems closed under weak thinning is again closed under weak thinning. Therefore, if we show that $(\mathcal{A}^{s,f},\alpha^{s,f})$ is a WTP for every $(s,f)\in\mathcal{P}$, it follows from 
Lemma \ref{lem:rep1} that $(\mathcal{A}^\mathrm{uni},\alpha)$ is a WTP. 

Let $(s,f) \in \mathcal{P}$. It immediately follows that $(\mathcal{A}^{s,f},\alpha^{s,f})$ is a probability pair and that P2 and P3 hold, so we have to verify P1. Note that for every $A,B \in \mathcal{M}$ and $x>0$ we have
\begin{equation}
\begin{aligned}
\rho_{A \circ B}(x) & = \frac{1}{x} \int_0^x 1_{A \circ B}(y) \mathrm{d}y \\
& = \frac{1}{x} \int_0^x 1_A(y)1_B(S_A(y)) \mathrm{d}y \\
& = \frac{1}{x} \int_0^{S_A(x)} 1_B(u) \mathrm{d}u \\
& = \frac{S_A(x)}{x} \frac{1}{S_A(x)}\int_0^{S_A(x)} 1_B(u) \mathrm{d}u \\
& = \rho_A(x) \rho_B(S_A(x)) = \rho_A(x) \rho_B(x\rho_A(x)).
\end{aligned}
\end{equation}

Let $A \in \mathcal{C}$ and $B \in \mathcal{A}^{s,f}$. Then
\begin{equation}
\begin{aligned}
\xi_{A \circ B}(s_n,f_n) & = \frac{1}{\log(s_n)} \int_{f_n}^{s_nf_n} \rho_A(y) \frac{\rho_B(y \rho_A(y))}{y} \mathrm{d}y \\
& \sim  \lambda(A) \frac{1}{\log(s_n)} \int_{f_n}^{s_nf_n} \frac{\rho_B(\lambda(A) y)}{y} \mathrm{d}y.
\end{aligned}
\end{equation}
If $\lambda(A)=0$ it is clear that $A \circ B \in \mathcal{A}^{s,f}$ with $\alpha^{s,f}(A \circ B) = 0 = \lambda(A)\alpha^{s,f}(B)$. If $\lambda(A)>0$, then we see that
\begin{equation}
\int_{f_n}^{s_nf_n} \frac{\rho_B(\lambda(A) y)}{y} \mathrm{d}y = \int_{\lambda(A)f_n}^{\lambda(A)s_nf_n} \frac{\rho_B(u)}{u} \mathrm{d}u.
\end{equation}
Since
\begin{equation}
\begin{aligned}
\left| \int_{\lambda(A)f_n}^{\lambda(A)s_nf_n} \frac{\rho_B(u)}{u} \mathrm{d}u - \int_{f_n}^{s_nf_n} \frac{\rho_B(u)}{u} \mathrm{d}u \right| & \leq \int_{\lambda(A)s_nf_n}^{s_nf_n} \frac{1}{u} \mathrm{d}u + \int_{\lambda(A)f_n}^{f_n} \frac{1}{u} \mathrm{d}u \\
& = 2 \log\left( \frac{1}{\lambda(A)} \right),
\end{aligned}
\end{equation}
we have
\begin{equation}
\label{eq:wtpasym}
\begin{aligned}
\frac{1}{\log(s_n)} \int_{f_n}^{s_nf_n} \frac{\rho_B(\lambda(A) y)}{y} \mathrm{d}y & \sim \frac{1}{\log(s_n)} \int_{f_n}^{s_nf_n} \frac{\rho_B(u)}{u} \mathrm{d}u \sim \alpha^{s,f}(B).
\end{aligned}
\end{equation}
Thus $A \circ B \in \mathcal{A}^{s,f}$ and since $\lambda(A)=\alpha^{s,f}(A)$ (see the proof of Propositon \ref{prop:alphaextendslambda}), we have
\begin{equation}
\alpha^{s,f}(A \circ B) = \lambda(A)\alpha^{s,f}(B) = \alpha^{s,f}(A) \alpha^{s,f}(B).
\end{equation}

We have showed that $(\mathcal{A}^\mathrm{uni},\alpha) \in WT$. To show that $(\mathcal{A}^\mathrm{uni},\alpha)$ can be extended, let $\mathcal{U}$ be any free ultrafilter on $\mathbb{N}$ and $(s,f) \in \mathcal{P}$. Then consider $\mu: \mathcal{M} \rightarrow [0,1]$ given by
\begin{equation}
\mu(A) := \mathcal{U} \mhyphen \lim_{n \rightarrow \infty} \xi_A(s_n,f_n).
\end{equation}
Since the $\mathcal{U}$-limit is multiplicative it follows completely analogous that $(\mathcal{M},\mu)$ is a WTP. Hence every $(\mathcal{A}^{s,f},\alpha^{s,f})$ can be extended to a WTP with $\mathcal{M}$ as its $f$-system. In particular, by Lemma \ref{lem:rep1}, this means that $(\mathcal{A}^{uni},\alpha)$ can be extended to a WTP with $\mathcal{M}$ as its $f$-system.

From de Finetti \cite{definetticoherence} it follows that if $\alpha$ can be extended to a finitely additive probability measure on an algebra, then $\alpha$ is coherent. Since we have showed that $\alpha$ can be extended to $\mathcal{M}$, which is an algebra, it follows that $(\mathcal{A}^\mathrm{uni},\alpha) \in WTC$. Notice that we showed that $\alpha^{s,f}$ can be extended to $\mathcal{M}$ for every $(s,f) \in \mathcal{P}$, so we also have $(\mathcal{A}^{s,f},\alpha^{s,f}) \in WTC$ for every $(s,f) \in \mathcal{P}$.

\qed
\end{proof}

For our proof of Theorem \ref{thm:unique}, we need an alternate expression for $U(\log(A))$. For $A \in \mathcal{M}$ set $\tau_A: (1,\infty) \times \mathbb{N} \rightarrow [0,1]$ given by
\begin{eqnarray}
\tau_A(C,j) & := & \sigma_A( C^{j-1}(C-1),C^{j-1}) \\
& = & \frac{1}{C^{j-1}(C-1)} \int_{C^{j-1}}^{C^j} 1_A(y) \mathrm{d}y.
\end{eqnarray}
Also set for $C>1$ and $A \in \mathcal{M}$
\begin{equation}
U^*(C,A) := \limsup_{n \rightarrow \infty} \sup_{k \in \mathbb{N}} \frac{1}{n} \sum_{j=k}^{k+n-1} \tau_A(C,j).
\end{equation}

\begin{lemma}
\label{lem:UlogA}
For every $A \in \mathcal{M}$ we have
\begin{equation}
\lim_{C \downarrow 1} U^*(C,A) = U(\log(A)).
\end{equation}
\end{lemma}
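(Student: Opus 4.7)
The plan is to apply the change of variables $z = \log y$ inside the integral defining $\tau_A(C,j)$, which will convert each $\tau_A(C,j)$ into an average of $1_{\log(A)}$ over a short interval of length $\delta := \log C$, up to an error that vanishes as $C \downarrow 1$. The rest is then a matter of matching that average with $\sigma_{\log(A)}$ and handling two compatible layers of discretization.

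Concretely, the substitution yields
\begin{equation*}
\tau_A(C,j) \;=\; \frac{1}{C^{j-1}(C-1)} \int_{(j-1)\delta}^{j\delta} 1_{\log(A)}(z)\, e^z\, dz,
\end{equation*}
and since $e^z \in [C^{j-1}, C^{j}]$ on the integration range, this gives the sandwich
\begin{equation*}
\frac{\delta}{C-1}\, \ell_j \;\leq\; \tau_A(C,j) \;\leq\; \frac{C\,\delta}{C-1}\, \ell_j,
\end{equation*}
where $\ell_j := \tfrac{1}{\delta}\int_{(j-1)\delta}^{j\delta} 1_{\log(A)}(z)\, dz \in [0,1]$. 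Because $\tfrac{\log C}{C-1} \to 1$ as $C \downarrow 1$, there is a function $\epsilon(C) \to 0$ for which $|\tau_A(C,j) - \ell_j| \leq \epsilon(C)$ uniformly in $j$.

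Averaging $\ell_j$ over $j = k,\ldots,k+n-1$ telescopes the integrals, giving $\tfrac{1}{n}\sum_{j=k}^{k+n-1} \ell_j = \sigma_{\log(A)}(n\delta, (k-1)\delta)$, so the same uniform error yields
\begin{equation*}
\left| \frac{1}{n}\sum_{j=k}^{k+n-1} \tau_A(C,j) \;-\; \sigma_{\log(A)}(n\delta, (k-1)\delta) \right| \;\leq\; \epsilon(C).
\end{equation*}
Taking $\sup_{k \in \mathbb{N}}$ and then $\limsup_{n \to \infty}$ of both sides, I obtain $|U^*(C,A) - L(\delta)| \leq \epsilon(C)$, where $L(\delta) := \limsup_{n \to \infty}\sup_{k \in \mathbb{N}} \sigma_{\log(A)}(n\delta, (k-1)\delta)$.

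The final step is to show that $L(\delta) = U(\log(A))$ for every $\delta > 0$. The inequality $L(\delta) \leq U(\log(A))$ is immediate, as the left side is a limsup over a restricted subfamily of parameters $(D,x) = (n\delta, (k-1)\delta)$. For the reverse direction I will take near-optimal sequences $D_m \to \infty$ and $x_m > 0$ with $\sigma_{\log(A)}(D_m, x_m) \to U(\log(A))$, set $n_m := \lceil D_m/\delta\rceil$ and choose $k_m$ with $(k_m-1)\delta \in [x_m - \delta, x_m]$, and use a symmetric-difference estimate on the two integration intervals to conclude $|\sigma_{\log(A)}(n_m\delta, (k_m-1)\delta) - \sigma_{\log(A)}(D_m, x_m)| = O(\delta/D_m) \to 0$. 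Combining the two parts yields $|U^*(C,A) - U(\log(A))| \leq \epsilon(C) \to 0$, proving the lemma. The main technical point is verifying the uniformity of $\epsilon(C)$ in $j$, which ultimately rests on the elementary limit $\tfrac{\log C}{C-1} \to 1$; everything else is bookkeeping around the two discretizations.
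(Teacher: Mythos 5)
Your proof is correct and follows essentially the same route as the paper's: both rest on the exponential change of variables identifying $\tau_A(C,j)$ with the average of $1_{\log(A)}$ over the block $[(j-1)\log C,\, j\log C)$ up to an error governed by $\tfrac{\log C}{C-1}\to 1$, together with a discretization step matching arbitrary windows $(D,x)$ to grid-aligned ones. The only difference is packaging: you carry a uniform pointwise error $\epsilon(C)$ per block, while the paper keeps two-sided bounds ($\tfrac{\log C}{C-1}U(\log(A)) \leq U^*(C,A) \leq 1-\tfrac{\log C}{C-1}(1-U(\log(A)))$) and passes to the limit at the end; both are sound.
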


\begin{proof} 
Let $A \in \mathcal{M}$ and fix $C>1$.

\textbf{Step 1} We show that 
\begin{equation}
U(\log(A))  = \limsup_{D \rightarrow \infty} \sup_{x \in (0,\infty)} \frac{1}{D} \sum_{j=P(x)+1}^{Q(D,x)} \int_{C^{j-1}}^{C^{j}} \frac{1_A(u)}{u} \mathrm{d}u,
\end{equation}
where 
\begin{equation}
P(x):= \left\lceil \frac{x}{\log(C)} \right\rceil \;\;\mathrm{and}\;\; Q(D,x):= \left\lceil \frac{D+x}{\log(C)} \right\rceil
\end{equation}
for $D,x \in (0,\infty)$.

Define
\begin{equation}
E(D,x):= \sigma_{\log(A)}(D,x) - \frac{1}{D} \int_{C^{P(x)}}^{C^{Q(D,x)}} \frac{1_A(u)}{u} \mathrm{d}u.
\end{equation}
Since
\begin{equation}
\sigma_{\log(A)}(D,x) = \frac{1}{D} \int_{x}^{x+D} 1_A(e^y) \mathrm{d}y = \frac{1}{D} \int_{e^x}^{e^{x+D}} \frac{1_A(u)}{u} \mathrm{d}u,
\end{equation}
we have
\begin{equation}
|E(D,x)| \leq \frac{1}{D} \int_{C^{P(x)-1}}^{C^{P(x)}} \frac{1}{u} \mathrm{d}u + \frac{1}{D} \int_{C^{Q(D,x)}}^{C^{Q(D,x)+1}} \frac{1}{u} \mathrm{d}u =\frac{2}{D} \log(C).
\end{equation}
This implies
\begin{equation}
\label{eq:startexpression}
\begin{aligned}
U(\log(A)) & = \limsup_{D \rightarrow \infty} \sup_{x \in (0,\infty)} \sigma_{\log(A)}(D,x) \\
& = \limsup_{D \rightarrow \infty} \sup_{x \in (0,\infty)}  \frac{1}{D} \int_{C^{P(x)}}^{C^{Q(D,x)}} \frac{1_A(u)}{u} \mathrm{d}u \\
& = \limsup_{D \rightarrow \infty} \sup_{x \in (0,\infty)} \frac{1}{D} \sum_{j=P(x)+1}^{Q(D,x)} \int_{C^{j-1}}^{C^{j}} \frac{1_A(u)}{u} \mathrm{d}u.
\end{aligned}
\end{equation}

\textbf{Step 2} We give an upper and lower bound for 
\begin{equation}
\int_{C^{j-1}}^{C^{j}} \frac{1_A(u)}{u} \mathrm{d}u
\end{equation}
in terms of $\tau_A(C,j)$.

If we set for $j \in \mathbb{N}$
\begin{equation}
\zeta(j) := \int_{C^{j-1}}^{C^j} 1_A(y) \mathrm{d}y = \tau_A(C,j)(C-1)C^{j-1},
\end{equation}
then
\begin{equation}
\label{eq:upperandlower1}
\int_{C^j-\zeta(j)}^{C^j} \frac{1}{u}\mathrm{d}u \leq \int_{C^{j-1}}^{C^j} \frac{1_A}{u}\mathrm{d}u \leq \int_{C^{j-1}}^{C^{j-1}+\zeta(j)} \frac{1}{u}\mathrm{d}u.
\end{equation}
We now observe that
\begin{equation}
\label{eq:computeupper}
\begin{aligned}
\int_{C^{j-1}}^{C^{j-1}+\zeta(j)} \frac{1}{u}\mathrm{d}u & =  \log\left(\frac{C^{j-1}+\zeta(j)}{C^{j-1}}\right) \\
& =  \log(1 + (C-1)\tau_A(C,j))
\end{aligned}
\end{equation}
and
\begin{equation}
\label{eq:computeunder}
\begin{aligned}
\int_{C^j-\zeta(j)}^{C^j} \frac{1}{u}\mathrm{d}u & =  \log\left(\frac{C^j}{C^j-\zeta(j)}\right) \\
& = \log(C) - \log\left( 1 + (C-1) (1 - \tau_A(C,j)) \right).
\end{aligned}
\end{equation}
The fact that $\log(1+y) \leq y$ for every $y\geq 0$, combined with (\ref{eq:upperandlower1}), (\ref{eq:computeupper}) and (\ref{eq:computeunder}) gives
\begin{equation}
\label{eq:upperandlower2}
\log(C) - (C-1) (1 - \tau_A(C,j))  \leq \int_{C^{j-1}}^{C^j} \frac{1_A}{u}\mathrm{d}u \leq (C-1)\tau_A(C,j).
\end{equation}

\textbf{Step 3} We combine Step 1 and Step 2 to finish the proof.

Observe that
\begin{equation}
\label{eq:linkdiscrete}
\begin{aligned}
& \limsup_{D \rightarrow \infty} \sup_{x \in(0,\infty)} \frac{1}{Q(D,x)-P(x)} \sum_{j=P(x)+1}^{Q(D,x)} \tau_A(C,j) \\  = &   \limsup_{n \rightarrow \infty} \sup_{k \in \mathbb{N}} \frac{1}{n} \sum_{j=k}^{k+n-1} \tau_A(C,j) = U^*(C,A).
\end{aligned}
\end{equation}

We use (\ref{eq:upperandlower2}) and (\ref{eq:linkdiscrete}) to find an upperbound for the expression in (\ref{eq:startexpression}), giving us
\begin{equation}
\label{eq:upperforU}
\begin{aligned}
U(\log(A)) & =  \limsup_{D \rightarrow \infty} \sup_{x \in (0,\infty)} \frac{1}{D} \sum_{j=P(x)+1}^{Q(D,x)} \int_{C^{j-1}}^{C^{j}} \frac{1_A(u)}{u} \mathrm{d}u  \\
& \leq \limsup_{D \rightarrow \infty} \sup_{x \in (0,\infty)} \frac{C-1}{D} (Q(D,x)-P(x)) \gamma(D,x) \\
& = \frac{C-1}{\log(C)} \limsup_{D \rightarrow \infty} \sup_{x \in (0,\infty)} \frac{1}{Q(D,x)-P(x)} \sum_{j=P(x)+1}^{Q(D,x)} \tau_A(C,j) \\
& = \frac{C-1}{\log(C)} U^*(C,A).
\end{aligned}
\end{equation}

Analogously, we find that
\begin{equation}
\label{eq:lowerforU}
\begin{aligned}
U(\log(A)) & \geq 1 - \frac{C-1}{\log(C)} (1 - U^*(C,A)).
\end{aligned}
\end{equation}

Combining (\ref{eq:upperforU}) and (\ref{eq:lowerforU}) we obtain
\begin{equation}
\frac{\log(C)}{C-1} U(\log(A)) \leq U^*(C,A) \leq 1-\frac{\log(C)}{C-1}(1-U(\log(A))),
\end{equation}
which implies
\begin{equation}
\label{eq:identityforU}
\lim_{C \downarrow 1} U^*(C,A) = U(\log(A)).
\end{equation}
\qed
\end{proof}

We also need the following lemma.

\begin{lemma}
\label{lem:ineqforB}
Let $(\mathcal{F},\mu)$ be a WTP. Then for any $A \in \mathcal{F}$ and $C>1$
\begin{equation}
\mu(A) \leq C \sup_{j \in \mathbb{N}} \tau_A(C,j).
\end{equation}
\end{lemma}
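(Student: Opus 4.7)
\smallskip
\noindent
\textbf{Plan.} The plan is to derive the lemma from Proposition~\ref{prop:muandrho}, which already supplies $\mu(A) \leq \limsup_{x\to\infty} \rho_A(x)$. So the only real task is to show that
\begin{equation*}
\limsup_{x\to\infty} \rho_A(x) \;\leq\; C\,u, \qquad u := \sup_{j \in \mathbb{N}} \tau_A(C,j).
\end{equation*}
The link between $\rho_A$ and $\tau_A$ is that $\tau_A(C,j)$ is just the average of $1_A$ on the geometric interval $[C^{j-1}, C^j]$, so unpacking the definition gives the per-interval bound
\begin{equation*}
\int_{C^{j-1}}^{C^j} 1_A(y)\, \mathrm{d}y \;\leq\; u\, C^{j-1}(C-1) \quad \text{for all } j \geq 1.
\end{equation*}

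First I would sum these per-interval bounds over $j = 1, \ldots, n$, combined with the trivial estimate $S_A(1) \leq 1$ on the starting interval, to obtain $S_A(C^n) \leq 1 + u(C^n - 1)$ for every $n \geq 0$. Next, for any $x \in [C^n, C^{n+1}]$, monotonicity of $S_A$ bounds the numerator of $\rho_A(x) = S_A(x)/x$ by $1 + u(C^{n+1} - 1)$, while $x \geq C^n$ gives a lower bound on the denominator. A short calculation then yields
\begin{equation*}
\rho_A(x) \;\leq\; \frac{1 + u(C^{n+1}-1)}{C^n} \;=\; uC + \frac{1-u}{C^n}.
\end{equation*}
Letting $x$ (hence $n$) tend to infinity gives $\limsup_{x\to\infty} \rho_A(x) \leq uC$, and Proposition~\ref{prop:muandrho} closes the argument.

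There is no serious obstacle here: the whole thing is essentially a discretization estimate, and the factor $C$ in the statement of the lemma is precisely the unavoidable loss from the mismatch between the numerator bound (evaluated at $C^{n+1}$) and the denominator bound (at $C^n$). It is worth noting that P1 and P2 of Definition~\ref{def:wtp} are not invoked directly in this argument; they have already been digested into Proposition~\ref{prop:muandrho}, which is the only WTP-specific input needed.
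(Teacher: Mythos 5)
Your proposal is correct and takes essentially the same route as the paper: both arguments reduce the lemma to the estimate $\limsup_{x\to\infty}\rho_A(x)\le C\sup_{j\in\mathbb{N}}\tau_A(C,j)$ and then conclude via Proposition~\ref{prop:muandrho}, which is indeed the only WTP-specific input. The only difference is cosmetic: the paper derives that estimate by comparing $\rho_A$ with an explicit majorizing set $B=\bigcup_{j=1}^\infty[C^{j-1},\,C^{j-1}+S\,C^{j-1}(C-1))$, whereas you sum the per-interval bounds directly (and, in doing so, handle the initial interval $[0,1)$ slightly more carefully than the paper's pointwise claim $\rho_A\le\rho_B$).
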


\begin{proof}
Let $(\mathcal{F},\mu)$ be a WTP with $A \in \mathcal{F}$. Fix $C>1$ and write
\begin{equation}
S := \sup_{j \in \mathbb{N}} \tau_A(C,j).
\end{equation}
The idea is to introduce a set $B \in \mathcal{M}$ for which we have $\limsup_{x \rightarrow \infty} \rho_B(x) \leq CS$ and $\rho_A(x) \leq \rho_B(x)$ for all $x$. Set
\begin{equation}
B := \bigcup_{j=1}^\infty [C^{j-1}, C^{j-1} + S C^{j-1}(C-1)).
\end{equation}
By construction of $B$ we have $\rho_A(x) \leq \rho_B(x)$ for every $x \in (0,\infty)$. So
\begin{equation}
\begin{aligned}
\limsup_{x \rightarrow \infty} \rho_A(x) & \leq  \limsup_{x \rightarrow \infty} \rho_B(x) \\
& =  \limsup_{n \rightarrow \infty} \rho_B(C^n + S C^n (C-1)) \\
& =  \limsup_{n \rightarrow \infty} \sum_{j=1}^{n+1} \frac{S C^{j-1}(C-1)} {C^n + S C^n (C-1)} \\
& =  \limsup_{n \rightarrow \infty} \frac{S (C^{n+1}-1)} {C^n + S C^n (C-1)} \\
& =  \limsup_{n \rightarrow \infty} \frac{C S} {\frac{C}{C-C^{-n}}(1 + S(C-1))} \\
& =  \frac{C S} {1 + S(C-1)} \leq CS.
\end{aligned}
\end{equation}
By Proposition \ref{prop:muandrho} we then find
\begin{equation}
\mu(A) \leq \limsup_{x \rightarrow \infty} \rho_A(x) \leq C S.
\end{equation}
\qed
\end{proof}

We are ready to give the proof of Theorem \ref{thm:unique}.

\begin{proof} \textit{of Theorem \ref{thm:unique}} \;
Let $(\mathcal{F},\mu)$ be a WTP and $A \in \mathcal{F}$. It is sufficient to show that
\begin{equation}
L(\log(A)) \leq \mu(A) \leq U(\log(A)).
\end{equation}

We give the following example to give an idea of the proof that follows. Set
\begin{equation*}
\begin{aligned}
Z_1 & :=  \bigcup_{i=1}^\infty [2i,2i+1) = [2,3) \cup [4,5) \cup [6,7) \cup \ldots, \\
Z_2 & :=  \bigcup_{i=1}^\infty [4i+1,4i+2) = [5,6) \cup [9,10) \cup [13,14) \cup \ldots, \\
Z_3 & :=  \bigcup_{i=1}^\infty [4i+3,4i+4) = [7,8) \cup [11,12) \cup [15,16) \cup \ldots.
\end{aligned}
\end{equation*}
Note that $Z_1,Z_2,Z_3 \in \mathcal{C}$ are pairwise disjoint. Now, we set
\begin{equation}
A' :=  Z_1 \circ A + Z_2 \circ A + Z_3 \circ A.
\end{equation}
Observe that for $j \geq 3$
\begin{equation}
\tau_{A'}(2,j) = \frac{1}{2} \left( \tau_A(2,j-1) + \tau_A(2,j-2) \right).
\end{equation}
So we constructed a set $A'$ that on each interval $[2^{j-1},2^{j})$ with $j \geq 3$ has an average that equals the average of the averages of $A$ on two consecutive intervals. By weak thinnability we find that $\mu(A')=\frac{1}{2}\mu(A) + \frac{1}{4}\mu(A) + \frac{1}{4}\mu(A) = \mu(A)$. If $\tau_{A'}(2,j)$ is convergent or only oscillates a little, we can give a good upper bound of $\mu(A)$ using Lemma \ref{lem:ineqforB}. Applying this strategy not only for $C=2$ but for any $C>1$ and averages of not only two but arbitrarily many averages on consecutive intervals, is what happens in the proof.

\textbf{Step 1} We construct a $\hat{A} \in \mathcal{F}$.

Fix $C>1$ and $n \in \mathbb{N}$. We split up $[C^{j-1},C^j)$ into intervals of length $1$ plus a remainder interval for every $j$. Set for $j \in \mathbb{N}$
\begin{equation}
N_j := \left\lfloor C^{j-1}(C-1) \right\rfloor
\end{equation}
and for $j \in \mathbb{N}$ and $l \in \{1,...,N_j\}$
\begin{equation}
I(j,l) := \left[ C^{j-1} + l-1, C^{j-1} + l \right),
\end{equation}
so that for every $j \in \mathbb{N}$ we have
\begin{equation}
[C^{j-1},C^j) =  \left[ C^{j-1} + N_j, C^{j} \right) \cup \bigcup_{l=1}^{N_j} I(j,l).
\end{equation}

Choose $u \in \mathbb{N}$ such that for every $j \in \mathbb{N}$ we have
\begin{equation}
\label{eq:ubigenough}
N_{u+j} \geq (N_{n+j}+1) \sum_{p=0}^n \lfloor C^{p} \rfloor,
\end{equation}
which can be done since $N_j$ is asymptotically equivalent with $C^{j-1}(C-1)$. For $p \in \{0,..,n\}$, $k \in \{1,..,\lfloor C^{p} \rfloor\}$ and $j \in \mathbb{N}$ we set 
\begin{equation}
I^{p,k}(j) :=  \bigcup_{l} I\left(j, l\sum_{i=0}^n \lfloor C^{i} \rfloor + \sum_{i=0}^{p-1} \lfloor C^{i} \rfloor  + k \right).
\end{equation}
For $l \leq T$ set
\begin{equation}
\zeta(l,T) := \bigcup_{i=0}^{\lceil l \rceil -1} \left[ \frac{Ti}{\lceil l \rceil}, \frac{Ti + l}{\lceil l \rceil} \right)
\end{equation}
that `evenly' distributes mass $l$ over the interval $[0,T)$. Note that (\ref{eq:ubigenough}) guarantees that
\begin{equation}
m(I^{p,k}(u+j)) \geq C^{n+j-1}(C-1) \geq C^{n-p+j-1}(C-1)
\end{equation}
for every $j \in \mathbb{N}$, so
\begin{equation}
Z(p,k) := \bigcup_{j=1}^\infty \left( I^{p,k}(u+j) \circ \zeta\left( C^{n-p+j-1}(C-1),  m (I^{p,k}(u+j)) \right) \right)
\end{equation}
is well defined. Note that by construction $Z(p,k) \in \mathcal{C}$ and
\begin{equation}
m (Z(p,k) \cap I^{p,k}(u+j)) = C^{n-p+j-1}(C-1).
\end{equation}
From this it directly follows that
\begin{equation}
\lambda(Z(p,k)) = \frac{C^n}{C^{p+u}}.
\end{equation}

We now introduce
\begin{equation}
\hat{A} := \bigcup_{p=0}^n \bigcup_{k=1}^{\lfloor C^p \rfloor} Z(p,k) \circ A.
\end{equation}
Observe that all the $Z(p,k)$ are disjoint. So P1 and the fact that $\mathcal{F}$ is an $f$-system imply that $\hat{A} \in \mathcal{F}$.

\textbf{Step 2} We give an upperbound for $\mu(A)$ by first giving an upperbound for $\mu(\hat{A})$ and then relating $\mu(A)$ and $\mu(\hat{A})$. 

A crucial property of $\hat{A}$ is that for $j \in \mathbb{N}$
\begin{equation}
m ( [C^{u+j-1},C^{u+j}) \cap \hat{A}) = \sum_{p=0}^n \lfloor C^p \rfloor m( [C^{j+n-p-1},C^{j+n-p}) \cap A ). 
\end{equation}
Hence
\begin{equation}
\label{eq:upperfortau}
\begin{aligned}
\tau_{\hat{A}}(C,u+j) & = C^{n-u} \sum_{p=0}^n \lfloor C^p \rfloor C^{-p}\tau_A(C,j+n-p) \\
& \leq  C^{n-u} \sum_{p=0}^n \tau_A(C,j+n-p) \\
& \leq  C^{n-u} \sup_{k \in \mathbb{N}} \sum_{j=k}^{k+n} \tau_A(C,j).
\end{aligned}
\end{equation}
We apply Lemma \ref{lem:ineqforB} for $\hat{A}$ and find with (\ref{eq:upperfortau}) that
\begin{equation}
\label{eq:upperformuhat}
\mu(\hat{A}) \leq C^{n-u+1} \sup_{k \in \mathbb{N}} \sum_{j=k}^{k+n} \tau_A(C,j).
\end{equation} 

The weak thinnability of $\mu$ gives that
\begin{equation}
\label{eq:relatexandxhat}
\mu(\hat{A}) = \sum_{p=0}^n \sum_{k=1}^{a_p} \mu(Z(p,k))\mu(A) = \mu(A) C^{n-u} \sum_{p=0}^n \lfloor C^p \rfloor C^{-p} .
\end{equation}
Combining (\ref{eq:upperformuhat}) and (\ref{eq:relatexandxhat}) gives
\begin{equation}
\label{eq:upperforA}
\begin{aligned}
\mu(A) & = \frac{C^{u-n}}{\sum_{p=0}^n \lfloor C^p \rfloor C^{-p}} \mu(\hat{A}) \\
& \leq  \frac{C^{u-n}}{\sum_{p=0}^n (C^p-1) C^{-p}} \mu(\hat{A}) \\
& \leq  \frac{C^{u-n}}{n+1 - \frac{1}{1-1/C}} \mu(\hat{A}) \\
& \leq  C \frac{n+1}{n+1 - \frac{1}{1-1/C}} \sup_{k \in \mathbb{N}} \frac{1}{n+1} \sum_{j=k}^{k+n} \tau_A(C,j).
\end{aligned}
\end{equation}

\textbf{Step 3} We take limits in (\ref{eq:upperforA}).

Unfix $n$ and $C$. We first take the limit superior for $n \rightarrow \infty$ in (\ref{eq:upperforA}), giving
\begin{equation}
\mu(A) \leq C \limsup_{n \rightarrow \infty} \sup_{k \in \mathbb{N}} \frac{1}{n+1} \sum_{j=k}^{k+n} \tau_A(C,j) = C U^*(C,A).
\end{equation}
Then we take the limit superior for $C \downarrow 1$ and find by Lemma \ref{lem:UlogA} that
\begin{equation}
\mu(A) \leq \limsup_{C \downarrow 1} U^*(C,A) = U(\log(A)).
\end{equation}
The lower bound we can now easily obtain by applying our upper bound for the complement of $A$. Doing this, we see that
\begin{equation}
\begin{aligned}
1-\mu(A) & \;=\;  \mu(A^c) \\
& \;\leq\;  U(\log(A^c)) \\
& \;=\;  1 - L(\log(A)),
\end{aligned}
\end{equation}
giving that $\mu(A) \geq L(\log(A))$.
\qed
\end{proof}

\begin{proof} \textit{of Theorem \ref{thm:maximal}}
We prove the contrapositive. Let $(\mathcal{F},\mu)$ be a WTP with $\mathcal{F} \setminus \mathcal{A}^\mathrm{uni} \not=\emptyset$. Let $A \in \mathcal{F} \setminus \mathcal{A}^\mathrm{uni}$. By Lemma \ref{lem:rep1}, this means that there is a $(s,f) \in \mathcal{P}$ such that
\begin{equation*}
I := \liminf_{n \rightarrow \infty} \xi_A(s_n,f_n) \not = \limsup_{n \rightarrow \infty} \xi_A(s_n,f_n) =: S.
\end{equation*}
Clearly, we can find $m,l \in \mathbb{N}^\infty$ such that $\xi_A(s_{m_n},f_{m_n})$ tends to $I$ and $\xi_A(s_{l_n},f_{l_n})$ tends to $S$. Now set $s'_n := s_{m_n}$, $f'_n := f_{m_n}$, $s''_n := s_{l_n}$ and $f''_n := f_{l_n}$. Then we see that $A \in \mathcal{A}^{s',f'}$ and $A \in \mathcal{A}^{s'',f''}$ with
\begin{equation*}
\alpha^{s',f'}(A) = I \;\;\mathrm{and}\;\; \alpha^{s'',f''}(A) = S.
\end{equation*}
In the proof of Theorem \ref{thm:wtp} we showed that $(\mathcal{A}^{s',f'},\alpha^{s',f'})$ and $(\mathcal{A}^{s'',f''},\alpha^{s'',f''})$ are both in WTC. Thus $(\mathcal{F},\mu)$ is not canonical with respect to $WT$ and in case $\mu$ is coherent, $(\mathcal{F},\mu)$ is not canonical with respect to $WTC$.

\qed
\end{proof}

\begin{proof} \textit{of Proposition \ref{prop:uniformunique}} \;
We give a proof along the lines of Mattila \cite[p.~45]{mattila}, with small adaptations for completeness and more generality.

Let $(X,d)$ be a metric space and $\nu_1,\nu_2$ uniform measures on $X$. Write $h_1(r):= \nu_1(B(x,r))$ and $h_2(r):= \nu_2(B(x,r))$ for $r>0$, which are well defined since $\nu_1$ and $\nu_2$ are uniform. We show that $\nu_1=c\nu_2$ for some $c>0$.  It is sufficient to show that $\nu_1=c\nu_2$ on all open sets.

First let $A$ be an open set of $(X,d)$ with $\nu_1(A)<\infty$ and $\nu_2(A)<\infty$. Suppose that $r>0$ is such that $h_2$ is continuous in $r$. Then
\begin{equation}
\begin{aligned}
|\nu_2(A \cap B(x,r))- \nu_2(A \cap B(y,r))| & \leq \nu_2(B(x,r) \triangle B(y,r)) \\
& \leq \nu_2( B(x,r+d(x,y)) \setminus B(x,r) ) \\
& = h_2(r+d(x,y)) - h_2(r).
\end{aligned}
\end{equation}
Hence $x \mapsto \nu_2(A \cap B(x,r))$ is a continuous mapping from $X$ to $[0,\infty)$. Since $h_2$ is nondecreasing, it can have at most countable many discontinuities. So we can choose $r_1,r_2,r_3,...$ such that $\lim_{n \rightarrow \infty} r_n = 0$ and $h_2$ is continuous in every $r_n$. 

For $n \in \mathbb{N}$ let $f_n: X \rightarrow [0,1]$ be given by
\begin{equation}
f_n(x) := 1_A(x) \frac{\nu_2(A \cap B(x,r_n))}{h_2(r_n)}.
\end{equation}
Notice that by our previous observation $f_n$ is continuous on $A$, hence $f_n$ is measurable. Because $A$ is open, we have $\lim_{n \rightarrow \infty} f_n(x) = 1$ for every $x \in A$. With Fatou's Lemma we find
\begin{equation}
\begin{aligned}
\nu_1(A) & =  \int_A \lim_{n \rightarrow \infty} f_n(x) \nu_1(\mathrm{d}x) \\
& \leq  \liminf_{n \rightarrow \infty} \frac{1}{h_2(r_n)} \int_A \nu_2(A \cap B(x,r_n)) \nu_1(\mathrm{d}x) \\
& \leq  \liminf_{n \rightarrow \infty} \frac{1}{h_2(r_n)} \int_X \int_A 1_{B(x,r_n)}(y) \nu_2(\mathrm{d}y) \nu_1(\mathrm{d}x).
\end{aligned}
\end{equation}
Note that any uniform measure is $\sigma$-finite. Applying Fubini's theorem we obtain
\begin{equation}
\begin{aligned}
\nu_1(A) & \leq \liminf_{n \rightarrow \infty} \frac{1}{h_2(r_n)} \int_A \int_X 1_{B(x,r_n)}(y) \nu_1(\mathrm{d}x) \nu_2(\mathrm{d}y) \\
& = \liminf_{n \rightarrow \infty} \frac{1}{h_2(r_n)} \int_A \nu_1(B(y,r_n)) \nu_2(\mathrm{d}y) \\
& = \liminf_{n \rightarrow \infty} \frac{h_1(r_n)}{h_2(r_n)} \nu_2(A).
\end{aligned}
\end{equation}
By interchanging $\nu_1$ and $\nu_2$ we get
\begin{equation}
\label{eq:interchanged}
\nu_2(A) \leq \liminf_{n \rightarrow \infty} \frac{h_2(r_n)}{h_1(r_n)} \nu_1(A).
\end{equation}
Note that $\liminf_{n \rightarrow \infty} \frac{h_2(r_n)}{h_1(r_n)}>0$ since (\ref{eq:interchanged}) would otherwise imply that all open balls are null sets. So we may rewrite (\ref{eq:interchanged}) as
\begin{equation}
\begin{aligned}
v_1(A) & \geq \frac{1}{\liminf_{n \rightarrow \infty} \frac{h_2(r_n)}{h_1(r_n)}} \nu_2(A) \\
& = \limsup_{n \rightarrow \infty} \frac{h_1(r_n)}{h_2(r_n)} \nu_2(A) \\
& \geq \liminf_{n \rightarrow \infty} \frac{h_1(r_n)}{h_2(r_n)} \nu_2(A).
\end{aligned}
\end{equation}
Hence $v_1(A)=cv_2(A)$ with
\begin{equation}
c: =  \liminf_{n \rightarrow \infty} \frac{h_1(r_n)}{h_2(r_n)} > 0.
\end{equation}

Now let $A$ be any open set of $(X,d)$. Let $x \in X$ and set $A_n := A \cap B(x,n)$ for $n \in \mathbb{N}$. Note that $A_n$ is open with $\nu_1(A_n)\leq \nu_1(B(x,n))<\infty$ and $\nu_2(A_n) \leq \nu_2(B(x,n))<\infty$. Hence, by the first part of the proof, we find $\nu_1(A_n)=c\nu_2(A_n)$. But then
\begin{equation}
\nu_1(A) = \lim_{n \rightarrow \infty} \nu_1(A_n) = \lim_{n \rightarrow \infty} c\nu_2(A_n) = c \nu_2(A).
\end{equation}
\qed
\end{proof}

\begin{proof} \textit{of Proposition \ref{prop:asymptotic}} \;
Fix $A \in \mathcal{L}(X)$ and $x,y \in X$. By (\ref{eq:assumptionX}) we have
\begin{equation}
\lim_{u \rightarrow \infty} \frac{h(r^+(u))}{h(r^-(u))}=\lim_{u \rightarrow \infty} \frac{h(r^-(u)+1)}{h(r^-(u))}=\lim_{r \rightarrow \infty} \frac{h(r+1)}{h(r)}=1.
\end{equation}
Hence
\begin{equation}
\label{eq:asympt1}
\frac{\bar{\nu}(B(x,r^-(u))\cap A)}{h(r^-(u))} \sim \frac{\bar{\nu}(B(x,r^+(u))\cap A)}{h(r^+(u))}.
\end{equation}
Observe that for any $r \in [0,\infty)$ we have
\begin{equation}
\begin{aligned}
\left| \frac{\bar{\nu}(B(x,r)\cap A)}{h(r)} - \frac{\bar{\nu}(B(y,r)\cap A)}{h(r)} \right|
& = \frac{1}{h(r)} |\bar{\nu}(A \cap B(x,r))- \bar{\nu}(A \cap B(y,r))| \\
&  \leq \frac{1}{h(r)} \nu(B(x,r) \triangle B(y,r)) \\
&  \leq \frac{1}{h(r)} \nu( B(x,r+d(x,y)) \setminus B(y,r) ) \\
& = \frac{h(r+d(x,y)) - h(r)}{h(r)}.
\end{aligned}
\end{equation}
By (\ref{eq:assumptionX}), it follows that
\begin{equation}
\label{eq:asympt2}
\frac{\bar{\nu}(B(x,r^-(u))\cap A)}{h(r^-(u))} \sim \frac{\bar{\nu}(B(y,r^-(u))\cap A)}{h(r^-(u))} 
\end{equation}
Combining (\ref{eq:asympt1}) and (\ref{eq:asympt2}) gives the desired result.
\qed
\end{proof}

\begin{proof} \textit{of Proposition \ref{prop:euclideanspace}} \;
Suppose $X=\mathbb{R}^n$ with $d$ Euclidean distance. Set
\begin{equation}
\delta_n := \frac{2 \pi^{n/2}}{\Gamma(n/2)}.
\end{equation}
Let $\nu$ be the Borel measure on $\mathbb{R}^n$. Note that $h(r) = n^{-1} \delta_n r^n$. If we set $u=\sqrt[n]{n\delta_n^{-1}y}$, then  
\begin{equation}
\begin{aligned}
\int_x^{xD} \frac{\bar{\rho}_A(y)}{y} \mathrm{d}y & = \int_x^{xD} \frac{\delta_n}{y^2} \int_0^{\sqrt[n]{n\delta_n^{-1}y}} r^{n-1} K_A(r) \mathrm{d}r \mathrm{d}y \\
& = \int_{ \sqrt[n]{n\delta_n^{-1}x} }^{ \sqrt[n]{n\delta_n^{-1}xD} } \frac{n^2}{u^{n+1} } \int_0^{u} r^{n-1} K_A(r) \mathrm{d}r \mathrm{d}u.
\end{aligned}
\end{equation}
Now observe that by partial integration
\begin{equation}
\int \frac{n^2}{u^{n+1}} \int_0^{u} r^{n-1} K_A(r) \mathrm{d}r \mathrm{d}u = \frac{-n}{u^n} \int_0^{u} r^{n-1} K_A(r) \mathrm{d}r + n \int \frac{K_A(u)}{u} \mathrm{d}u.
\end{equation}
If we set for $D,x \in (1,\infty)$
\begin{equation}
\zeta_A(D,x) := - \frac{1}{\log(D) u^n} \int_0^{u} r^{n-1} K_A(r) \mathrm{d}r \biggr\rvert_{u=x}^{xD},
\end{equation}
then
\begin{equation}
\bar{\xi}_A(D^n,n^{-1}\delta_n x^n) = \zeta_A(D,x) + \frac{1}{\log(D)} \int_x^{xD} \frac{K_A(u)}{u} \mathrm{d}u.
\end{equation}
Since $|\zeta_A(D,x)| \leq \frac{1}{\log(D)}$, the desired result follows.
\end{proof}

\section{Discussion}
\label{sec:discussion}

\subsection{Algebra versus $f$-system}

The natural analogue of an $\sigma$-algebra in finite additive probability theory is an algebra. It has been remarked \cite{schurzleitgeb,wenmackershorsten} that the restriction of $\mathcal{M}$ to $\mathcal{C}$ is problematic since $\mathcal{C}$ is not an algebra. However, \emph{any} collection extending $\mathcal{C}$ that is not $\mathcal{M}$ itself, is not an algebra since $a(\mathcal{C})=\mathcal{M}$. This can be seen as follows. Let $A \in \mathcal{M}$ and set 
\begin{equation}
\begin{aligned}
A_+ & := \{ a+1 \;:\; a \in A \} \\
A_- & := \{ a-1 \;:\; a \in A\setminus[0,1) \} \\
M_1 & := \cup_{i=0}^\infty [2i,2i+1), \\
M_2 & := M_1^c, \\
X & :=  (A \cap M_1) \cup (A_+^c \cap M_2), \\
Y & :=  (A \cap M_2) \cup (A_-^c \cap M_1).
\end{aligned}
\end{equation}
Then $M_1,M_2,X,Y \in \mathcal{C}$ with $\lambda(M_1)=\lambda(M_2)=\lambda(X)=\lambda(Y)=1/2$ and $A = (M_1 \cap X) \cup (M_2 \cap Y)$. Hence $A \in a(\mathcal{C})$ and since $A \in \mathcal{M}$ was arbitrary, we have $a(\mathcal{C})=\mathcal{M}$.

This observation bring us to the conclusion that the requirement of an algebra, despite the fact that an algebra is the natural analogue of an $\sigma$-algebra, is too restrictive. Furthermore, finite additivity \emph{only} dictates how a probability measure behaves when taking \emph{disjoint} unions, and thus only suggests closedness under disjoint unions. Coherence is a concern since, as remarked before, it is not guaranteed on $f$-systems whereas it is guaranteed on algebras. Coherence, however, can also be achieved on $f$-systems, as $\alpha$ does, and therefore coherence not being guaranteed is in itself not an argument against $f$-systems. Therefore, we think the requirement of an $f$-system rather than an algebra in Definition \ref{def:probabilitypair} is justified.

It should be noted that even if one prefers $\mathcal{M}$ as domain, by Theorem \ref{thm:wtp} $(\mathcal{A}^\mathrm{uni},\alpha)$ can be extended to a WTP with $\mathcal{M}$ as $f$-system. Such a pair is not canonical with respect to $WT$ or $WTC$ (Theorem \ref{thm:maximal}), but still has $\mathcal{A}^\mathrm{uni}$ included as an $f$-system within the domain on which probability is uniquely determined. 

\subsection{Thinnability}
\label{subsec:thinnability}

Suppose that in Definition \ref{def:wtp} we replace P1 by the property that for every $A,B \in \mathcal{F}$ we have $A \circ B \in \mathcal{F}$ and $\mu(A \circ B)=\mu(A)\mu(B)$. Instead of \emph{weak} thinnability, we call this \emph{thinnability}. Now consider the set
\begin{equation}
A = \bigcup_{n=0}^\infty [2^{2n},2^{2n+1}).
\end{equation}
We have $A,A^c \in \mathcal{A}^\mathrm{uni}$ with $\alpha(A)=\alpha(A^c)=1/2$. But also, we have $A\circ A^c \in \mathcal{A}^\mathrm{uni}$ with
\begin{equation}
\begin{aligned}
\alpha(A\circ A^c) & = \alpha \left( \bigcup_{n=0}^\infty \left[2^{2n} + \frac{1}{6}2^{2n}, 2^{2n} + \frac{2}{3}2^{2n} \right)\right) \\
& = \lambda \left( \bigcup_{n=0}^\infty \left[2n\log(2) + \log(1+1/6), 2n\log(2) + \log(1+1/3) \right)\right) \\
& = \frac{\log(1+2/3)-\log(1+1/6)} {2\log(2)} \not = \frac{1}{4} = \alpha(A)\alpha(A^c).
\end{aligned}
\end{equation}
So $(\mathcal{A}^\mathrm{uni},\alpha)$ is not a thinnable pair. Since every thinnable pair is also a WTP, by Theorem \ref{thm:unique} we see that a thinnable probability measure on $\mathcal{A}^\mathrm{uni}$, does not exist.

Notice that we are not necessarily looking for the strongest notion of uniformity, but for a notion that allows for a canonical probability pair with a ``big'' $f$-system. This is the reason why we are interested in weak thinnability rather than thinnability. There may, of course, be other notions of uniformity that lead to canonical pairs with bigger $f$-systems than $\mathcal{A}^\mathrm{uni}$. At this point, we can not see any convincing motivation for such notions. 

\subsection{Weak thinnability}

In this paper, we only studied the notion of weak thinnability from the interest in canonical probability pairs. There are, however, interesting open questions about the property of weak thinnability itself, that we did not address in this paper. Some examples are:

\begin{itemize}
\item Is every probability pair that extends $(\mathcal{A}^\mathrm{uni},\alpha)$ a WTP?
\item Is every WTP coherent?
\item Can every WTP be extended to a WTP with $\mathcal{M}$ as $f$-system?
\item How do the sets $\{ \mu(A) \;:\; (\mathcal{F},\mu) \in WT \;\mathrm{and}\; A \in \mathcal{F} \}$ and $\{ \mu(A) \;:\; (\mathcal{F},\mu) \in WTC \;\mathrm{and}\; A \in \mathcal{F} \}$ look like for $A \not\in \mathcal{A}^\mathrm{uni}$?
\item Is P2 redundant? If no, what probability pairs are not a WTP, but do satisfy P1 and P3?
\item How does weak thinnability relate to the property $\mu(cA)=\mu(A)$, where $cA:= \{ ca :\:\; a \in A\}$ and $c>1$?
\end{itemize}

\subsection{Size of $\mathcal{A}^\mathrm{uni}$}

A typical example of a set in $\mathcal{M}$ that does not have natural density, but is assigned a probability by $\alpha$, is
\begin{equation}
A := \bigcup_{n=0}^\infty [e^{2n},e^{2n+1}),
\end{equation}
for which we have $\alpha(A)=1/2$. It is, however, unclear how ``many'' of such sets there are, i.e. how much ``bigger'' the $f$-system $\mathcal{A}^\mathrm{uni}$ is than $\mathcal{C}$ and how much ``smaller'' it is than $\mathcal{M}$. If we could construct a uniform probability measure on $\mathcal{M}$ by the method of Section \ref{sec:general}, we could determine the probability of $\mathcal{A}^\mathrm{uni}$ if $\mathcal{A}^\mathrm{uni} \in \mathcal{A}^\mathrm{uni}(\mathcal{M}) $. To construct such a probability measure, we need to equip $\mathcal{M}$ with a metric $d$ such that $(\mathcal{M},d)$ has a uniform measure. It is, however, not at all clear how we should choose $d$. So at this point, it is not clear if there is a useful way of measuring the collections $\mathcal{C}$ and $\mathcal{A}^\mathrm{uni}$.


\begin{thebibliography}{}



\bibitem{christensen}
Christensen, Jens Peter Reus:
	On some measures analogous to Haar measure.
	\emph{Mathematica Scandinavica}, 26, 103-106 (1970).

\bibitem{definetticoherence}
de Finetti, B.:
	Sul significato soggettivo della probabilita.
	\emph{Fundam. Math.}, 17, 298-329 (1931).

\bibitem{definetti}
de Finetti, B.:
	\emph{Theory of probability}, Vols. 1 \& 2 (A. Machi \& A. Smith, Trans.). 
	New York: Wiley (1974).

\bibitem{vandouwen}
van Douwen, Eric K.:
  Finitely additive measures on $\mathbb{N}$.
  \emph{Topology and its applications}, 47 (3), 223-268 (1992).

\bibitem{halpern}
Halpern, J.D.:
	 The independence of the axiom of choice from the Boolean prime ideal theorem
	\emph{Fund. Math.}, 55, 57-66  (1964).

\bibitem{kolmogorov}
Kolmogorov, A. N.:
  \emph{Grundbegriffe der Wahrscheinlichkeitsrechnung}.
  Berlin: Springer-Verlag (1933).

\bibitem{mattila}
Mattila, Pertti:
	\emph{Geometry of sets and measures in Euclidean spaces: Fractals and rectifiability}.
 	Cambridge: Cambridge University Press (1995).

\bibitem{schirokauerkadane}
Schirokauer, O.,  Kadane, J.:
	Uniform Distributions on the Natural Numbers
	\emph{Journal of Theoretical Probability}, 20, 429-441 (2007).

\bibitem{schurzleitgeb}
Schurz, G.,  Leitgeb, H.:
	Finitistic and frequentistic approximation of probability measures with or without $\sigma$-additivity.
	\emph{Studia Logica}, 89 (2), 257-283 (2008).

\bibitem{solovay}
Solovay, R.M.:
A model of set theory in which every set of reals is Lebesgue measurable.
  \emph{Ann. of Math.}, 92, 1-56 (1970).

\bibitem{tenenbaum}
Tenenbaum, G.: 
	\emph{Introduction to analytic and probabilistic number theory}.
	Cambridge: Cambridge University Press (1995).
	
\bibitem{pier}
Pier, Jean-Paul:
	\emph{Amenable locally compact groups}.
	New York: Wiley (1984).

\bibitem{wenmackershorsten}
Wenmackers, S., Horsten, L.:
  Fair infinite lotteries.
  \emph{Synthese}, 190 (1), 37-61 (2013).

\end{thebibliography}
\end{document}